\newcommand{\field}[1]{\mathbb{#1}}
\newcommand{\N}{\field{N}}
\numberwithin{equation}{section}
\newtheorem{theorem}{Theorem}[section]
\newtheorem{lemma}[theorem]{Lemma}
\newtheorem{proposition}[theorem]{Proposition}
\theoremstyle{remark}
\renewenvironment{proof}[1][Proof]{\begin{trivlist}
\item[\hskip \labelsep {\bfseries #1:}]}{\qed\end{trivlist}}
\title{Siladi\'c's theorem: weighted words, refinement and companion}
\author{Jehanne Dousse}
\date{\today}
\begin{document}

\begin{abstract}
In a previous paper, the author gave a combinatorial proof and refinement of Siladi\'c's theorem, a Rogers-Ramanujan type partition identity arising from the study of Lie algebras. Here we use the basic idea of the method of weighted words introduced by Alladi and Gordon to give a non-dilated version, further refinement and companion of Siladi\'c's theorem. However, while in the work of Alladi and Gordon, identities were proved by doing transformations on generating functions, we use recurrences and $q$-difference equations as the original method seems difficult to apply in our case.
As the non-dilated version features the same infinite product as Schur's theorem, another dilation allows us to find a new interesting companion of Schur's theorem, with difference conditions very different from the original ones.
\end{abstract}

\maketitle

\section{Introduction}
A partition of $n$ is a non-increasing sequence of positive integers whose sum is $n$.
For example, there are $5$ partitions of $4$: $4$, $3+1$, $2+2$, $2+1+1$ and $1+1+1+1$.

An important field in the theory of partitions and $q$-series is the study of partition identities of the Rogers-Ramanujan type.
The Rogers-Ramanujan identities~\cite{RogersRamanujan}, first discovered by Rogers in 1894 and rediscovered by Ramanujan in 1917 are the following $q$-series identities:
\begin{theorem}
\label{rr}
Let $a=0$ or $1$. Then
\begin{equation*}
\sum_{n=0}^{\infty} \frac{q^{n(n+a)}}{(1-q)(1-q^2)\cdots (1-q^n)} = \prod_{k=0}^{\infty} \frac{1}{(1-q^{5k+a+1})(1-q^{5k+4-a})}.
\end{equation*}
\end{theorem}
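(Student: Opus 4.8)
The plan is to prove both cases at once via finite polynomial analogues of the two sides, working with recurrences rather than with transformations of infinite series --- which is in any case closer in spirit to the methods of this paper. For $a\in\{0,1\}$ define the polynomials
\begin{equation*}
d_n^{(a)}(q)=\sum_{j\ge 0} q^{j^2+aj}\begin{bmatrix} n-j\\ j\end{bmatrix}_q .
\end{equation*}
Since each Pochhammer symbol tends to $(q;q)_\infty$, we have $\begin{bmatrix} n-j\\ j\end{bmatrix}_q=\frac{(q;q)_{n-j}}{(q;q)_j(q;q)_{n-2j}}\to\frac{1}{(q;q)_j}$ as $n\to\infty$, hence $\lim_{n\to\infty}d_n^{(a)}(q)=\sum_{j\ge 0}q^{j^2+aj}/(q;q)_j$, which is exactly the left-hand side of the identity for the parameter $a$. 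So it suffices to understand $d_n^{(a)}$ well enough to pass to this limit.

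First I would record the Schur recurrence. Applying the $q$-Pascal rule $\begin{bmatrix} m\\ k\end{bmatrix}_q=\begin{bmatrix} m-1\\ k\end{bmatrix}_q+q^{m-k}\begin{bmatrix} m-1\\ k-1\end{bmatrix}_q$ to $\begin{bmatrix} n-j\\ j\end{bmatrix}_q$ and shifting $j\mapsto j+1$ in the resulting second sum yields
\begin{equation*}
d_n^{(0)}=d_{n-1}^{(0)}+q^{n-1}d_{n-2}^{(0)},\qquad d_n^{(1)}=d_{n-1}^{(1)}+q^{n}d_{n-2}^{(1)},
\end{equation*}
with $d_0^{(a)}=d_1^{(a)}=1$; each recurrence together with its two initial values determines the polynomial uniquely.

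The heart of the argument is \emph{Schur's identity}: the very same polynomials also admit the alternating closed form
\begin{equation*}
d_n^{(0)}=\sum_{j=-\infty}^{\infty}(-1)^j q^{j(5j-1)/2}\begin{bmatrix} n\\ \lfloor (n-5j+1)/2\rfloor\end{bmatrix}_q ,
\end{equation*}
together with an analogous formula for $d_n^{(1)}$ in which the exponent is $j(5j-3)/2$ and the lower entry is shifted accordingly. To prove this I would name the right-hand side $\widetilde d_n$, split its $q$-binomial coefficient by the $q$-Pascal rule, and regroup the resulting double sum so as to recognize $\widetilde d_{n-1}+q^{n-1}\widetilde d_{n-2}$; since $\widetilde d_0=\widetilde d_1=1$ is immediate, uniqueness of the recurrence then forces $\widetilde d_n=d_n^{(0)}$. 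I expect this to be the main obstacle: the floor in the lower entry makes the effect of $q$-Pascal depend on the parity of $n$, so even and odd $n$ must be handled separately, and lining up the powers of $q$ and the index shifts on the two sides is a delicate bookkeeping exercise.

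Finally I would let $n\to\infty$ in Schur's identity. For fixed $j$ both the lower entry and its complement tend to infinity, so $\begin{bmatrix} n\\ \lfloor (n-5j+1)/2\rfloor\end{bmatrix}_q\to\frac{1}{(q;q)_\infty}$, and the rapidly decaying weights $q^{j(5j-1)/2}$ justify passing the limit through the sum, giving
\begin{equation*}
\sum_{j\ge 0}\frac{q^{j^2}}{(q;q)_j}=\frac{1}{(q;q)_\infty}\sum_{j=-\infty}^{\infty}(-1)^j q^{j(5j-1)/2}.
\end{equation*}
Writing $j(5j-1)/2=5\binom{j}{2}+2j$ and invoking the Jacobi triple product identity, the theta series on the right equals $(q^5;q^5)_\infty(q^2;q^5)_\infty(q^3;q^5)_\infty$, and dividing by $(q;q)_\infty=(q;q^5)_\infty(q^2;q^5)_\infty(q^3;q^5)_\infty(q^4;q^5)_\infty(q^5;q^5)_\infty$ leaves $\prod_{k\ge 0}\frac{1}{(1-q^{5k+1})(1-q^{5k+4})}$, the $a=0$ case. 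The case $a=1$ is identical: there $j(5j-3)/2=5\binom{j}{2}+j$, the theta series becomes $(q^5;q^5)_\infty(q;q^5)_\infty(q^4;q^5)_\infty$, and the quotient is $\prod_{k\ge 0}\frac{1}{(1-q^{5k+2})(1-q^{5k+3})}$. The only non-elementary ingredient used is the Jacobi triple product identity.
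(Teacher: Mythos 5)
The paper contains no proof of Theorem~\ref{rr}: it is quoted as classical background, with a citation to \cite{RogersRamanujan}, to motivate the notion of a Rogers--Ramanujan type identity, so there is nothing in the paper to compare your argument against and it must be judged on its own. What you outline is Schur's 1917 polynomial proof, and the parts you make explicit check out: the recurrences $d_n^{(0)}=d_{n-1}^{(0)}+q^{n-1}d_{n-2}^{(0)}$ and $d_n^{(1)}=d_{n-1}^{(1)}+q^{n}d_{n-2}^{(1)}$ with $d_0^{(a)}=d_1^{(a)}=1$ do follow from the $q$-Pascal rule exactly as you describe; the termwise limits as $n\to\infty$ are justified, since each $q$-binomial coefficient is dominated coefficientwise by $1/(q;q)_\infty$ and the theta weights decay rapidly; the alternating form you quote for $a=0$ is correct (e.g.\ $d_5^{(0)}=\left[\begin{smallmatrix}5\\2\end{smallmatrix}\right]_q-q^2-q^3$); and the Jacobi triple product endgame, including the exponent bookkeeping $j(5j-1)/2=5\binom{j}{2}+2j$ and $j(5j-3)/2=5\binom{j}{2}+j$, lands on the correct infinite products. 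Two things keep this from being a complete proof. First, the entire weight of the argument rests on showing that the alternating sum satisfies the same second-order recurrence and initial conditions; you correctly identify this parity-split $q$-Pascal computation as the main obstacle, but it is precisely the nontrivial content of Schur's proof and is not carried out. Second, for $a=1$ the phrase ``the lower entry is shifted accordingly'' is not quite right: in the standard finite form the top entry must also change from $n$ to $n+1$, for instance $d_n^{(1)}=\sum_{j}(-1)^jq^{j(5j-3)/2}\left[\begin{smallmatrix}n+1\\ \lfloor(n-5j+3)/2\rfloor\end{smallmatrix}\right]_q$, as one already sees from $d_4^{(1)}=1+q^2+q^3+q^4+q^6=\left[\begin{smallmatrix}5\\2\end{smallmatrix}\right]_q-q\left[\begin{smallmatrix}5\\1\end{smallmatrix}\right]_q$; this does not affect the $n\to\infty$ limit, but it does change the recurrence verification you would have to perform. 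With those two points filled in the argument is sound, and it is pleasantly consonant with the paper's general philosophy of proving partition identities via recurrences and $q$-difference equations rather than series transformations.
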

These analytic identities can be interpreted in terms of partitions in the following way:
\begin{theorem}
Let $a=0$ or $1$. Then for every natural number $n$, the number of partitions of $n$ such that the difference between two consecutive parts is at least $2$ and the part $1$ appears at most $1-a$ times is equal to the number of partitions of $n$ into parts congruent to $\pm (1+a) \mod 5.$
\end{theorem}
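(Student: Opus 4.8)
The plan is to prove the identity at the level of generating functions and then quote Theorem~\ref{rr}. Fix $a\in\{0,1\}$, write $(q;q)_m=(1-q)(1-q^2)\cdots(1-q^m)$ with $(q;q)_0=1$, and let $D(n)$ (resp.\ $E(n)$) denote the number of partitions of $n$ of the first kind (gaps at least $2$, part $1$ occurring at most $1-a$ times) (resp.\ of the second kind, parts $\equiv\pm(1+a)\bmod 5$). I will show that $\sum_{n\ge 0}D(n)q^n$ equals the series on the left of Theorem~\ref{rr} and that $\sum_{n\ge 0}E(n)q^n$ equals the product on its right; comparing coefficients of $q^n$ then gives $D(n)=E(n)$ for every $n$, which is the assertion.

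The product side is immediate: a partition counted by $E(n)$ is specified by choosing a multiplicity for each allowed part, so $\sum_{n\ge 0}E(n)q^n=\prod_{j}(1-q^j)^{-1}$, the product being over positive integers $j\equiv\pm(1+a)\pmod 5$. The positive integers congruent to $1+a$ or to $4-a$ modulo $5$ are exactly those of the forms $5k+a+1$ and $5k+4-a$ with $k\ge 0$, and $4-a\equiv-(1+a)\pmod 5$; hence this product is $\prod_{k\ge 0}(1-q^{5k+a+1})^{-1}(1-q^{5k+4-a})^{-1}$, the right-hand side of Theorem~\ref{rr}.

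For the series side I sort the partitions counted by $D(n)$ by their number of parts $\ell$. Such a partition $\lambda=(\lambda_1>\lambda_2>\cdots>\lambda_\ell)$ has consecutive differences $\ge 2$ and smallest part $\ge 1+a$, so it dominates componentwise the \emph{minimal} such partition with $\ell$ parts, namely $\mu^{(\ell)}=(2\ell-1+a,\,2\ell-3+a,\,\dots,\,3+a,\,1+a)$, whose weight is $|\mu^{(\ell)}|=\ell^2+a\ell=\ell(\ell+a)$. Subtracting $\mu^{(\ell)}$ from $\lambda$ entrywise produces a partition into at most $\ell$ parts, and this operation is a bijection from the partitions counted by $D$ with exactly $\ell$ parts onto all partitions with at most $\ell$ parts, the inverse being addition of $\mu^{(\ell)}$. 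Since the generating function for partitions with at most $\ell$ parts is $1/(q;q)_\ell$, summing over $\ell\ge 0$ (the case $\ell=0$ contributing the empty partition of $0$) yields $\sum_{n\ge 0}D(n)q^n=\sum_{\ell\ge 0}q^{\ell(\ell+a)}/(q;q)_\ell$, which is the left side of Theorem~\ref{rr}.

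The routine parts are the two generating-function evaluations; the one point deserving care is the bijection in the last paragraph. One must check that $\lambda-\mu^{(\ell)}$ is really a partition, which holds because $\lambda_i-\lambda_{i+1}\ge 2=\mu^{(\ell)}_i-\mu^{(\ell)}_{i+1}$ and $\lambda_\ell\ge 1+a=\mu^{(\ell)}_\ell$, and, conversely, that adding $\mu^{(\ell)}$ to an arbitrary partition with at most $\ell$ parts restores both the difference-$2$ condition and the correct restriction on the part $1$: when $a=0$ the chain $\mu^{(\ell)}$ ends in $1$, so the part $1$ may occur (once), whereas when $a=1$ it ends in $2$, forbidding the part $1$ altogether, which is exactly the condition ``at most $1-a$ ones''. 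Verifying that no admissible $\lambda$ with $\ell$ parts is left out then completes the argument.
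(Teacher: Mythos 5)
Your proof is correct. The paper states this combinatorial interpretation of the Rogers--Ramanujan identities without proof (it is quoted as classical background), so there is no argument in the paper to compare against; your derivation --- evaluating the product side as the generating function for parts $\equiv\pm(1+a)\pmod 5$ and the sum side via subtraction of the minimal staircase $\mu^{(\ell)}=(2\ell-1+a,\dots,1+a)$ of weight $\ell(\ell+a)$ --- is the standard one, and is in fact exactly the ``minimal partition'' technique that the paper later attributes to Alladi and Gordon in their treatment of Schur's theorem. All the details you flag (the componentwise inequalities guaranteeing that $\lambda-\mu^{(\ell)}$ is a partition, the reversibility of the map, and the identification of $5k+a+1$, $5k+4-a$ with the residues $\pm(1+a)\bmod 5$) check out.
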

More generally, a theorem of the type ``for all $n$, the number of partitions of $n$ with some difference conditions equals the number of partitions of $n$ with some congruence conditions" is called a partition identity of the Rogers-Ramanujan type.

The method of weighted words was introduced by Alladi and Gordon~\cite{Alladi,AllGor} to give a refinement of another famous Rogers-Ramanujan type partition identity : Schur's theorem~\cite{Schur}.

\begin{theorem}[Schur]
\label{schur}
For any integer $n$, let $A(n)$ denote the number of partitions of $n$ into distinct parts congruent to $1$ or $2$ modulo $3$ and $B(n)$ the number of partitions of $n$ such that parts differ by at least $3$ and no two consecutive multiples of $3$ appear. Then for all $n$,
$$A(n)=B(n).$$
\end{theorem}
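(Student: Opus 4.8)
The plan is to prove Schur's theorem by passing to generating functions and, in the spirit of the method of weighted words, by first establishing a two-parameter refinement of it. Since $A(n)$ counts partitions into distinct parts of the form $3k-1$ or $3k-2$, we have $\sum_{n\ge 0}A(n)q^{n}=\prod_{k\ge 1}(1+q^{3k-2})(1+q^{3k-1})$; introducing a variable $c$ marking the parts $\equiv 1\pmod 3$ and a variable $d$ marking the parts $\equiv 2\pmod 3$ gives the refined product $\prod_{k\ge 1}(1+cq^{3k-2})(1+dq^{3k-1})$. The objective then becomes to show that the partitions counted by $B$, appropriately weighted by $c$ and $d$, are generated by this same product; specializing $c=d=1$ at the end recovers the theorem.

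To analyze the $B$-side I would build its partitions from the smallest part upward. The prohibition of two consecutive multiples of $3$ is equivalent to requiring that consecutive parts differ by at least $3$, and by at least $4$ when the smaller of the two is divisible by $3$; because of this dichotomy the recursion cannot be homogeneous. I would therefore introduce an auxiliary variable $x$ (marking, say, the number of parts, and carrying the $c$- and $d$-weights along) and split the $B$-side generating function as $f_{0}(x;q)+f_{1}(x;q)$ according to whether the smallest part is or is not a multiple of $3$. Deleting the smallest part and renormalizing the remaining ones then yields a coupled pair of $q$-difference equations expressing $f_{0}(x;q)$ and $f_{1}(x;q)$ in terms of $f_{0}(xq^{3};q)$, $f_{1}(xq^{3};q)$ and related shifted series, together with the initial data $f_{0}(0;q)=f_{1}(0;q)=1$.

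I would then conclude either by iterating this system to an explicit series in $x$, or, more efficiently, by writing the product $\prod_{k\ge 1}(1+cxq^{3k-2})(1+dxq^{3k-1})$, splitting it according to the residue of the smallest part, and checking that the two pieces satisfy exactly the same coupled recursion and the same initial conditions; since such a system determines its solution uniquely, the two generating functions must agree, and setting $x=c=d=1$ gives $A(n)=B(n)$. Purely bijective proofs, grouping and splitting parts as in the work of Bressoud, are also known. I expect the main obstacle to be precisely the inhomogeneity forced by the ban on consecutive multiples of $3$: it is what rules out a one-step generating-function manipulation and forces the coupled system, so the delicate point is to set up the two equations --- and the corresponding split of the infinite product --- so that a part landing on a multiple of $3$, together with the interdiction on its successor, is accounted for correctly on both sides.
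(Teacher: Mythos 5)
Your plan is viable, but note first that the paper does not actually prove this statement: Theorem~\ref{schur} is quoted as classical background, and the only proof the paper describes (for the refined Theorem~\ref{schurnondil}) is Alladi and Gordon's, which computes the generating function of \emph{minimal} partitions satisfying the difference conditions via $q$-binomial coefficients and finishes with a $q$-series transformation. Your route --- reformulating the forbidden pattern as ``gap $\geq 3$, and $\geq 4$ when the smaller part is divisible by $3$'' (which is a correct reformulation) and then setting up coupled $q$-difference equations with a uniqueness argument --- is a genuinely different approach, essentially Andrews's classical $q$-difference-equation proof of Schur's theorem. Ironically it is much closer in spirit to what this paper actually does for Siladi\'c's theorem (recurrences on the extremal part of the partition, functional equations for $G_k$, induction, then $k\to\infty$) than to the Alladi--Gordon argument it cites for Schur. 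What the minimal-partition approach buys is an explicit closed-form double sum and a one-line finish via $q$-Chu--Vandermonde; what your approach buys is robustness when the difference conditions are too intricate for the minimal partitions to be computable, which is precisely the point this paper makes.

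Two places where your sketch needs real care before it is a proof. First, ``deleting the smallest part and renormalizing'' only works if the renormalization subtracts a multiple of $3$ from the remaining parts, since the condition on consecutive multiples of $3$ is invariant under translation by $3$ but not by $1$ or $2$; moreover, after deleting the smallest part the residual object is not a generic $B$-partition but one whose smallest part is bounded below in a way depending on the size \emph{and} residue of the deleted part, so closing the system requires more auxiliary functions than the two you name (this is exactly why Andrews's proof juggles several $f_i$'s, and why you must verify the system genuinely determines its solution before invoking uniqueness). Second, if you want the $c,d$-refinement to match the product $\prod_{k\geq 1}(1+cq^{3k-2})(1+dq^{3k-1})$, you cannot leave the weighting of the $B$-side unspecified: the correct assignment gives weight $c$ to parts $\equiv 1$, weight $d$ to parts $\equiv 2$, and weight $cd$ to parts $\equiv 0 \pmod 3$ (the colour $ab$ in Theorem~\ref{schurnondil}), and this choice is forced, not optional. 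For the unrefined statement you may simply set $c=d=1$ throughout and avoid the issue.
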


The idea of the method is to consider integers appearing in different colours $a$,$b$,$c$,... and to view a partition with difference conditions as a sequence of coloured integers with an order on the colours and difference conditions on the numbers. The colours also represent free parameters which allow to refine the theorems. The method of weighted words is usually used at the non-dilated level, which means that one obtains the final theorem by doing dilations of the type
$$q \rightarrow q^M, a \rightarrow a q^{-m_a}, b \rightarrow bq^{-m_b}, \dots ,$$
where $M, m_a, m_b$ are some positive integers.

For example, in the case of Schur's theorem, Alladi and Gordon considered integers in three colours
\begin{equation} \label{colororder}
ab < a < b,
\end{equation}
such that
$$1_{ab} < 1_a < 1_b < 2_{ab} < 2_a < 2_b < \cdots,$$
and the dilations
\begin{equation}
\label{dilschur}
q \rightarrow q^3, a \rightarrow a q^{-2}, b \rightarrow bq^{-1}.
\end{equation}
Then, denoting by $c(\lambda)$ the colour of $\lambda$, the non-dilated refinement of Schur's theorem is
\begin{theorem}[Alladi-Gordon]
\label{schurnondil}
Let $A(u,v,n)$ be the number of partitions of $n$ into $u$ distinct parts coloured $a$ and $v$ distinct parts coloured $b$.
Let $B(u,v,n)$ be the number of partitions $\lambda_1 + \cdots + \lambda_s$ of $n$ into distinct parts with no part $1_{ab}$, such that the difference $\lambda_i - \lambda_{i+1} \geq 2$ if $c(\lambda_{i}) = ab$ or $c(\lambda_i) < c(\lambda_{i+1})$ in \eqref{colororder}, having $u$ parts $a$ or $ab$ and $v$ parts $b$ or $ab$.

Then for all $u,v,n \in \N,$ $A(u,v,n) = B(u,v,n).$
\end{theorem}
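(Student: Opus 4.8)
The generating function of $A$ is immediate: the $a$-coloured parts and the $b$-coloured parts form two independent partitions into distinct positive integers, so
$$\sum_{u,v,n\ge 0}A(u,v,n)\,a^{u}b^{v}q^{n}=(-aq;q)_\infty(-bq;q)_\infty=\prod_{k\ge 1}(1+aq^{k})(1+bq^{k}),$$
and the whole content of the theorem is that the generating function of $B$ equals this same product. I would prove that by deriving a $q$-difference equation (equivalently, a recurrence) for the $B$-side.

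To obtain the equation I would refine $B$ by its largest part. For $k\ge 1$ and a colour $x\in\{ab,a,b\}$ (the part $1_{ab}$ being forbidden) let $P^{x}_{k}$ be the generating function for the $B$-partitions whose largest part is exactly $k_{x}$, and set $P_{0}=1$ for the empty partition. Deleting the largest part of a $B$-partition and recording which coloured integers may legally follow $k_{x}$ yields a linear system; the crux of the bookkeeping is that a part coloured $ab$, and an $a$-coloured part immediately above a $b$-coloured part, each force the next index down by $2$, whereas every other adjacency drops it only by $1$. Writing $G_{k}=P_{0}+\sum_{m\le k}(P^{ab}_{m}+P^{a}_{m}+P^{b}_{m})$ for the generating function of $B$-partitions all of whose indices are $\le k$, together with the analogous partial sum $H_{k}$ over the colours $ab$ and $a$, one is led to a coupled second-order recurrence in $(G_{k},H_{k})$ whose coefficients are monomials in $a$, $b$ and $q^{k}$; packaging the $P^{x}_{k}$ into a single power series in an auxiliary variable turns this into one $q$-difference equation.

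It then remains to solve the recurrence and pass to the limit, identifying $\lim_{k\to\infty}G_{k}$ (which exists formally, since imposing "all indices $\le k$" only changes terms of $q$-degree $>k$) with $(-aq;q)_\infty(-bq;q)_\infty$. I expect the genuine difficulty to be exactly here: getting the gap rules and the base cases (notably the missing $1_{ab}$) exactly right so that the $q$-difference equation is correct, and then solving it cleanly; once the equation is in hand, collapsing its solution to the infinite product is routine $q$-series manipulation. An alternative I would keep in reserve is the route Alladi and Gordon originally used — classify $B$-partitions by their numbers of $a$-only, $b$-only and $ab$-coloured parts to write the $B$-side as an explicit multiple $q$-series, then sum it to the product via a limiting case of the $q$-Gauss/Cauchy identity — or, more combinatorially, a Bressoud-type bijection that splits each $ab$-coloured part into an $a$-coloured part and a $b$-coloured part, the difference conditions being precisely what absorbs the resulting collisions.
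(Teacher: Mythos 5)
Your identification of the $A$-side generating function as $\prod_{k\geq 1}(1+aq^k)(1+bq^k)$ is correct, but that is the trivial half. Your main route for the $B$-side is a plan rather than a proof: you never write down the recurrences, never state the functional equation you intend to solve, and you explicitly concede that getting the gap rules, base cases and the solution of the recurrence right is ``exactly the difficulty'' --- but that is the entire content of the theorem. Concretely, two things are missing. First, the explicit system coming from classifying a $B$-partition by its largest coloured part: with $G_{k_x}(a,b,q)$ the generating function for $B$-partitions with largest part at most $k_x$, the difference conditions give, for $k\geq 2$, $G_{k_{ab}}=G_{(k-1)_b}+abq^{k}G_{(k-2)_b}$, $G_{k_a}=G_{k_{ab}}+aq^{k}G_{(k-1)_a}$, $G_{k_b}=G_{k_a}+bq^{k}G_{(k-1)_b}$ (the exclusion of $1_{ab}$ enters only through the base cases). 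Second, and more importantly, the key identity that actually produces the product: one needs a finite-$k$ analogue of the symmetry $G_{\infty}(a,b,q)=(1+aq)\,G_{\infty}(b,aq,q)$, which must be guessed and then proved by induction on $k$ from the system above; iterating that symmetry to get $\prod(1+aq^k)(1+bq^k)$ is the routine step, whereas finding and proving the finite version is not, and your sketch gives no candidate for it. (This is precisely the architecture of Proposition~\ref{keyprop} in the present paper for the Siladi\'c case, so the route is viable --- it just has not been travelled.)

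It is also worth noting that your two routes relate to the literature in opposite ways from how you rank them. The proof Alladi and Gordon actually gave --- and the one this paper describes for Theorem~\ref{schurnondil} --- is your ``reserve'' option: compute the generating function of the minimal $B$-partitions with prescribed numbers of parts of each colour via $q$-binomial (Gaussian) coefficients, express the $B$-side as an explicit multiple series, and transform it into the product by a $q$-series identity. Your primary option, $q$-difference equations on the largest part, is not their proof of this theorem but is the method this paper adopts for the much more intricate Siladi\'c difference conditions, where minimal partitions are hard to describe. So the approach you lead with is sound and well precedented, but as written it proves nothing; either execute the induction on the finite-$k$ symmetry, or fall back to the minimal-partition computation, which for Schur's difference conditions is short and completely explicit.
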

To prove this theorem, Alladi and Gordon used the fact that a partition with $n$ parts satisfying the difference conditions is a minimal partition with $n$ parts satisfying the difference conditions to which one has added a partition having at most $n$ parts. So they computed the generating function for such minimal partitions using $q$-binomial coefficients and concluded using a $q$-series identity to transform the generating function of partitions with difference conditions into the generating function for partitions with congruence conditions.

Then the method was used again by Alladi, Andrews and Gordon~\cite{AllAndGor2} to refine and generalise a famous theorem of G\"ollnitz~\cite{Gollnitz}, and yet again by the same authors~\cite{AllAndGor} to give a refinement of Capparelli's theorem~\cite{Capparelli}, a partition identity which arose in the study of Lie algebras.

In this paper, we shall give the same type of refinement/non-dilated version for Siladi\'c's identity~\cite{Siladic}, a Rogers-Ramanujan type identity which was proved by studying representations of the twisted affine Lie algebra $A^{(2)}_2.$

\begin{theorem}[Siladi\'c]
\label{siladic}
The number of partitions $\lambda_1 +\cdots+ \lambda_s$ of an integer $n$ into parts different from $2$ such that difference between two consecutive parts is at least $5$ (ie. $\lambda_i - \lambda_{i+1} \geq 5$) and
\begin{equation*}
\begin{aligned}
&\lambda_i - \lambda_{i+1} = 5 \Rightarrow \lambda_i + \lambda_{i+1} \not \equiv \pm 1, \pm 5, \pm 7 \mod 16,
\\&\lambda_i - \lambda_{i+1} = 6 \Rightarrow \lambda_i + \lambda_{i+1} \not \equiv \pm 2, \pm 6 \mod 16,
\\&\lambda_i - \lambda_{i+1} = 7 \Rightarrow \lambda_i + \lambda_{i+1} \not \equiv \pm 3 \mod 16,
\\&\lambda_i - \lambda_{i+1} = 8 \Rightarrow \lambda_i + \lambda_{i+1} \not \equiv \pm 4 \mod 16,
\end{aligned}
\end{equation*}
is equal to the number of partitions of $n$ into distinct odd parts.
\end{theorem}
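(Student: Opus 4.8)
The plan is to follow the philosophy of Alladi and Gordon by first proving a \emph{non-dilated, refined} version of Siladi\'c's theorem at the level of coloured integers, and then recovering Theorem~\ref{siladic} by an explicit dilation. The first task is to guess the right set of colours and the right colour order. Since the target product is $\prod_{k\ge 0}(1+q^{2k+1})$ --- the same infinite product as in Schur's theorem --- one expects a two-variable refinement: introduce colours (and free parameters) $a$ and $b$ together with a secondary colour $ab$, set up an ordering of coloured integers such as $1_{ab}<1_a<1_b<2_{ab}<2_a<2_b<\cdots$, and formulate a statement of the form ``the number of partitions into $u$ parts coloured $a$ (or $ab$) and $v$ parts coloured $b$ (or $ab$) subject to certain colour-dependent difference conditions equals the number of partitions of the same type into distinct parts of colours $a$ and $b$ only''. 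The dilation $q\to q^{2}$, $a\to aq^{-1}$, $b\to bq^{1}$ (or a variant thereof) should then turn the free side into distinct odd parts and, crucially, turn the simple non-dilated difference conditions into the intricate $\bmod 16$ conditions of Theorem~\ref{siladic}; checking that this dilation reproduces exactly the stated congruence restrictions is a bookkeeping step but an essential sanity check on the choice of colours.

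The second, and main, task is to actually prove the non-dilated refined identity. Here I would deliberately \emph{not} imitate Alladi and Gordon's generating-function-transformation technique (writing a partition with prescribed difference conditions as a minimal such partition plus an arbitrary partition, computing with $q$-binomial coefficients, and invoking a $q$-series identity): the difference conditions arising from Siladi\'c are not of the clean ``$\ge 1$ or $\ge 2$ depending on colours'' shape, so the minimal partitions are harder to parametrise and no convenient $q$-identity is available. Instead, following the author's earlier combinatorial proof, the plan is to set up a system of $q$-difference equations. Concretely, introduce generating functions $f_k(a,b,q)$ (or a small vector of such functions, indexed by the colour of the smallest part) counting the admissible partitions whose smallest part is $\ge k$, keep track of $u$ and $v$ by the variables $a$ and $b$, and derive recurrences relating $f_k$ to $f_{k+1}$, $f_{k+2}$, etc., by conditioning on whether $k$ appears as a part and in which colour. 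These recurrences encode exactly the difference conditions. One then shows that the generating function on the ``congruence'' side satisfies the same system with the same initial conditions, whence the two are equal; equivalently, one solves the recurrence explicitly and identifies the limit $k\to\infty$ (or $k\to$ smallest value) with the infinite product $\prod (1+aq^{\cdots})(1+bq^{\cdots})$.

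The hard part, as in the author's previous work, will be \emph{finding the correct $q$-difference system and proving that it has a unique solution}: one must choose the auxiliary functions so that the case analysis on the smallest part closes up into finitely many recurrences, verify that the non-trivial difference conditions (the analogues of the $\bmod 16$ restrictions) are faithfully captured by the coefficients in those recurrences, and then carry out the often delicate manipulation --- typically iterating the recurrence, or summing it against a suitable kernel --- needed to pass to the product form. A secondary technical point is handling the exceptional small parts (the forbidden $1_{ab}$, analogous to the excluded part $2$ in Theorem~\ref{siladic}) correctly in the initial conditions. Once the non-dilated refinement is established, Theorem~\ref{siladic} itself follows immediately by specialising the free parameters and applying the dilation, and --- as advertised in the abstract --- applying a \emph{different} dilation to the same non-dilated identity will yield the promised new companion of Schur's theorem.
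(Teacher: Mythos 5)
Your methodological skeleton matches the paper's: a non-dilated, two-parameter refinement in the spirit of Alladi--Gordon, proved not by minimal partitions and $q$-binomial manipulations but by recurrences and $q$-difference equations obtained by conditioning on an extreme part (the paper conditions on the largest part, defining $G_k$ as the generating function for admissible partitions with largest part at most $k$), followed by an induction and a passage to the limit that iterates a functional equation into the product $\prod(1+aq^k)(1+bq^k)$; the theorem then follows by dilation and setting $a=b=1$. So the plan is the right one in outline.

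However, the concrete setup you propose would fail, and the missing ingredient is precisely the paper's main novelty. Three colours $a,b,ab$ with the Schur ordering and a dilation of modulus $2$ cannot reproduce Siladi\'c's conditions: the reformulated difference conditions depend on $\lambda_i \bmod 8$ and force a dilation $q\to q^4$, $a\to aq^{-3}$, $b\to bq^{-1}$ (so the refinement separates parts $\equiv 1$ and $\equiv 3 \bmod 4$, not a generic odd/odd split), and --- crucially --- parts congruent to $2 \bmod 8$ and $6 \bmod 8$ must count \emph{twice} in the statistics $u$ and $v$. This forces two additional ``squared'' colours $a^2$ and $b^2$, attached only to odd non-dilated integers, interlaced into the order as $1_{ab}<1_a<1_{b^2}<1_b<2_{ab}<2_a<3_{a^2}<2_b<\cdots$, with an $8\times 5$ matrix of minimal gaps replacing any uniform ``$\ge 1$ or $2$'' rule. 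Without these squared colours the non-dilated statement is simply not equivalent to Siladi\'c's theorem, so the program stalls at step one. A second gap: you do not identify the key functional equation that makes the induction close up, namely identities of the shape $G_{2k+1_{ab}}(a,b,q)=(1+aq)\,G_{2k_a}(b,aq,q)$ (and three companions), in which the roles of $a$ and $b$ are swapped and $b$ is replaced by $aq$; it is the iteration of the limiting relation $G_\infty(a,b,q)=(1+aq)G_\infty(b,aq,q)$ that produces the infinite product, not a comparison with a system satisfied by the product side. Finally, the initial conditions exclude the parts $1_{ab}$ and $1_{b^2}$ (both dilating to forbidden small parts), not just one exceptional part.
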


In a previous paper~\cite{Doussesil}, we had already given a reformulation, combinatorial proof and (less precise) refinement of Siladi\'c's theorem, without using the method of weighted words.

\begin{theorem}[Dousse]
\label{refinement}
Let $n \in \N$ and $k \in \N^*$. Let $A(k,n)$ denote the number of partitions of $n$ into $k$ distinct odd parts, and $B(k,n)$ denote the number of partitions $\lambda_1 + \cdots+ \lambda_s$ of $n$ such that $k$ equals the number of odd part plus twice the number of even parts, satisfying the following conditions:
\begin{enumerate}
  \item $\forall i \geq 1, \lambda_i \neq 2$,
  \item $\forall i \geq 1, \lambda_i - \lambda_{i+1} \geq 5$,
  \item $\forall i \geq 1$,
  \begin{equation*}
  \begin{aligned}
&\lambda_i - \lambda_{i+1} = 5 \Rightarrow \lambda_i \equiv 1, 4 \mod 8,
\\&\lambda_i - \lambda_{i+1} = 6 \Rightarrow \lambda_i \equiv 1, 3, 5, 7 \mod 8,
\\&\lambda_i - \lambda_{i+1} = 7 \Rightarrow \lambda_i \equiv 0, 1, 3, 4, 6, 7 \mod 8,
\\&\lambda_i - \lambda_{i+1} = 8 \Rightarrow \lambda_i \equiv 0, 1, 3, 4, 5, 7 \mod 8.
	\end{aligned}
	\end{equation*}
\end{enumerate}
Then $A(k,n)=B(k,n)$.
\end{theorem}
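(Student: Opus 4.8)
The strategy is to establish the identity $A(k,n)=B(k,n)$ at the level of two-variable generating functions. Weighting a part $\lambda$ of a $B$-partition by $zq^\lambda$ when $\lambda$ is odd and by $z^2q^\lambda$ when $\lambda$ is even makes the exponent of $z$ record exactly the statistic ``number of odd parts plus twice the number of even parts'', so that $\sum_{k,n}B(k,n)z^kq^n$ is the generating function for $B$-partitions carrying this weight. Since $\sum_{k,n}A(k,n)z^kq^n=\prod_{j\ge0}(1+zq^{2j+1})$, it is enough to prove
\[
\sum_{k,n}B(k,n)\,z^kq^n=\prod_{j\ge0}(1+zq^{2j+1}).
\]

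First I would condition on the largest part. For $m\ge1$ with $m\neq2$, let $f_m=f_m(z,q)$ be the generating function (with the above weight) for the $B$-partitions whose largest part is $m$, and let $w(m)$ equal $zq^m$ or $z^2q^m$ according to the parity of $m$. Deleting the largest part of such a partition leaves a $B$-partition whose largest part $m'$ satisfies $m'\le m-5$, where moreover item~(3) forbids $m'=m-5$ unless $m\equiv1,4\pmod8$, forbids $m'=m-6$ unless $m$ is odd, forbids $m'=m-7$ unless $m\equiv2,5\pmod8$, and forbids $m'=m-8$ unless $m\equiv2,6\pmod8$, while imposing nothing further when $m'\le m-9$. (The empty partition below $m$ is allowed, and one must also discard the forbidden value $m'=2$.) This yields a linear recurrence of the shape $f_m=w(m)\bigl(1+\sum_{m'}f_{m'}\bigr)$ whose structure is periodic modulo $8$.

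Next I would repackage the $f_m$ according to the residue of $m$ modulo $8$: introduce $G_r(x)=\sum_{m\equiv r\,(8)}f_m\,x^{(m-r)/8}$ for $r\in\{0,1,3,4,5,6,7\}$, where $x$ tracks blocks of size $8$, and use the telescoping identity $\sum_{m'\le m-9}f_{m'}=\bigl(\sum_{m'\le m-8}f_{m'}\bigr)-f_{m-8}$ to rewrite the recurrence as a closed finite system of $q$-difference equations relating the $G_r$ and their images under $x\mapsto xq^8$. A suitable specialization of $x$ then recovers $\sum_{k,n}B(k,n)z^kq^n$.

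The main obstacle is solving this coupled $q$-difference system. Here I would be guided by the target product: since $P(z)=\prod_{j\ge0}(1+zq^{2j+1})$ satisfies $P(z)=(1+zq)P(zq^2)$, I would posit an ansatz for the $G_r$ built from $P$, its partial products and simple $q$-monomials, substitute it into every equation of the system, and verify that the four modular conditions of item~(3) are exactly what is needed for all equations to hold simultaneously; uniqueness of the solution (by comparing coefficients of $q^n$, inductively on $n$) then forces $\sum_{k,n}B(k,n)z^kq^n=P(z)$, which is the claim. An alternative that sidesteps the explicit solution would be to construct a weight-preserving bijection between partitions into distinct odd parts and $B$-partitions, using local moves that fuse two sufficiently close odd parts into a single even part (and the reverse); but checking that each of the four difference-and-residue conditions survives the moves, and that the process terminates, amounts to the same residue-class bookkeeping modulo $8$, so I expect that case analysis to be the real work in either approach.
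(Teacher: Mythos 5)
Your overall plan---recurse on the largest part, encode the mod-$8$ difference conditions as a system of $q$-difference equations, and identify the resulting generating function with $\prod_{j\ge 0}(1+zq^{2j+1})$ via its functional equation---is the right methodology, and it is essentially how this theorem is actually proved (here it is obtained as the specialization $u+v=k$ of the finer two-variable Theorem~\ref{th:refdilat}, itself the dilation of Theorem~\ref{th:nondil}, which is proved by exactly such largest-part recurrences and $q$-difference equations). But your write-up has a genuine gap where all the work lives: the step ``posit an ansatz for the $G_r$ built from $P$, its partial products and simple $q$-monomials, substitute and verify; uniqueness then forces the answer.'' The truncated generating functions (largest part bounded, or restricted to a residue class) do \emph{not} admit closed product forms for identities of this type, so there is no such ansatz to verify, and ``uniqueness'' only helps once a solution is exhibited. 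The mechanism that actually closes the argument is Proposition~\ref{keyprop}: an induction on $k$ establishing relations such as $G_{2k+1_{ab}}(a,b,q)=(1+aq)\,G_{2k_{a}}(b,aq,q)$, i.e., a truncated generating function equals $(1+aq)$ times a \emph{lower} truncation with the two colour parameters swapped and one of them multiplied by $q$; letting $k\to\infty$ and iterating the swap twice produces $\prod(1+aq^k)(1+bq^k)$. This single-step swap $a\leftrightarrow b$, $b\mapsto aq$ is only expressible because two parameters are carried through every recurrence; your one-variable weight $z^{\#\text{odd}+2\#\text{even}}$ collapses $a$ and $b$ and thereby destroys the very structure that makes the induction go through. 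So the missing idea is not ``more residue bookkeeping'' but the correct finite functional equation to induct on.

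There is also a concrete error in the bookkeeping you do set up: you state that a gap of $7$ is forbidden ``unless $m\equiv 2,5\pmod 8$'' and a gap of $8$ forbidden ``unless $m\equiv 2,6\pmod 8$.'' Condition~(3) says the opposite: a gap of $7$ requires $\lambda_i\equiv 0,1,3,4,6,7\pmod 8$, so it is forbidden precisely \emph{when} $\lambda_i\equiv 2,5\pmod 8$, and a gap of $8$ is forbidden precisely when $\lambda_i\equiv 2,6\pmod 8$. Since your recurrence $f_m=w(m)\bigl(1+\sum_{m'}f_{m'}\bigr)$ is built directly from these allowed/forbidden sets, the inversion would propagate into every equation of your system. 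Finally, note that deleting the largest part is safe here (the remaining word is still a valid $B$-partition because the constraints are purely between consecutive parts), but the reverse direction of each such bijection must be checked part by part, which is what Lemma~\ref{eqd} does; your sketch takes this for granted.
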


The method of weighted words will allow us to obtain not only a refinement where we keep track of the number of odd parts, but a non-dilated version which can lead to an infinitude of new identities and a refinement where we distinguish which parts are congruent to $1$ or $3$ modulo $4$. We now describe this non-dilated version.

Let us consider the integers appearing in five colours $a$, $b$, $ab$, $a^2$ and $b^2$, ordered as follows:
$$1_{ab} < 1_a < 1_{b^2} <1_{b} <2_{ab} < 2_a <3_{a^2} < 2_{b} <3_{ab} < 3_a < 3_{b^2} <3_b < \cdots .$$
Note that the colours $a^2$ and $b^2$ only appear for odd integers. It is the first time that the method of weighted words is used with ``squared'' colours.

We consider partitions $\lambda_1 + \cdots + \lambda_s$ where the entry $(x,y)$ in the matrix $A$ gives the minimal difference between $\lambda_i$ of colour $x$ and $\lambda_{i+1}$ of colour $y$:
$$
A=\bordermatrix{\text{} & a & b & ab & a^2 & b^2 \cr a_{odd} & 2&2&1&2&2 \cr b^2 &2&3&2&2&4 \cr b_{odd} &1&2&1&2&2 \cr ab_{even} &2&2&2&3&3 \cr a_{even}&2&2&2&3&3 \cr a^2 &3&3&3&4&4 \cr b_{even} &1&2&1&1&3 \cr ab_{odd} &2&3&2&2&3}.
$$
This definition might seem complicated or unnatural, but under the dilations
\begin{equation}
\label{dilat}
\begin{aligned}
q &\rightarrow q^4,\\
a &\rightarrow aq^{-3},\\
b &\rightarrow bq^{-1},
\end{aligned}
\end{equation}
the order on the coloured integers becomes the natural ordering
$$0_{ab} < 1_a < 2_{b^2} <3_{b} <4_{ab} < 5_a <6_{a^2} < 7_{b} <8_{ab} < 9_a < 10_{b^2} <11_b < \cdots, $$
and the difference conditions become those of Siladi\'c's theorem.

Therefore the non-dilated version of Siladi\'c's theorem is the following.
\begin{theorem}
\label{th:nondil}
For $u,v,n \in \N$, let $D(u,v,n)$ denote the number of partitions $\lambda_1 + \cdots + \lambda_s$ of $n$, with no part $1_{ab}$ or $1_{b^2}$, satisfying the difference conditions given by the matrix $A$, such that $u$ equals the number of parts $a$ or $ab$ plus twice the number of parts $a^2$ and $v$ equals the number of parts $b$ or $ab$ plus twice the number of parts $b^2$.

Then
\begin{equation*}
\sum_{u,v,n \in \N} D(u,v,n)a^ub^vq^n = \prod_{n\geq 1} \left(1+aq^n\right)\left(1+bq^n\right).
\end{equation*}
\end{theorem}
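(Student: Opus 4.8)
The plan is to encode the difference conditions of the matrix $A$ into a system of $q$-difference equations, and then to reduce that system to a single functional equation for
\begin{equation*}
F(a,b,q):=\sum_{u,v,n\in\N}D(u,v,n)\,a^ub^vq^n ,
\end{equation*}
namely $F(a,b,q)=(1+aq)(1+bq)\,F(aq,bq,q)$ together with $F(0,0,q)=1$. Since the infinite product $\prod_{n\geq1}(1+aq^n)(1+bq^n)$ visibly satisfies the same functional equation and the same normalisation, and since a formal power series in $a,b,q$ is determined by these two properties (extract the coefficient of $a^ub^vq^n$ from the functional equation and induct on $u+v+n$), the theorem follows.

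To produce the system I would read a partition counted by $D$ starting from its smallest part (smallest in the order on coloured integers) and building up. For $k\geq1$ and each colour $c$ that can occur at value $k$ — the colours $a$, $b$, $ab$ in general, with $a^2$ and $b^2$ also available when $k$ is odd — let $G_{k,c}$ be the generating function $\sum a^ub^vq^n$ over the partitions counted by $D$ whose smallest part is exactly $k_c$, and let $H_k$ be the generating function over those all of whose parts lie (weakly) above $k_c$ for every $c$, the empty partition included; then $H_k$ equals $H_{k+1}$ plus the relevant $G_{k,c}$'s, and $H_1=F$. Deleting the smallest part $k_c$, and reading off from column $c$ of $A$ the least value the new smallest part may take for each colour and parity it might carry, expresses $G_{k,c}$ as the monomial for $k_c$ (with a factor $a^2$, resp.\ $b^2$, attached to the colour $a^2$, resp.\ $b^2$) times a linear combination of the $G_{j,t}$ and $H_j$ with $j>k$. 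Because the order on coloured integers and the rows of $A$ are periodic of period $2$ in $k$, these relations close up over two consecutive values of $k$; letting $k\to\infty$, where every $G_{k,c}\to0$ and $H_k\to1$, and telescoping, one should obtain after simplification the functional equation above.

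The hard part will be this bookkeeping and the final collapse. The matrix $A$ has eight rows because the forced gap depends on the parity of the larger part on top of the two colours, so there are eight interacting families of generating functions; and the ``squared'' colours $a^2$ and $b^2$, which as the paper stresses have not occurred before in the method of weighted words, appear only at odd values, are interleaved in the order with the integers just below them (for instance $m_{a^2}$ lies between $(m-1)_a$ and $(m-1)_b$), and carry the weights $a^2$ and $b^2$; one therefore has to be careful about exactly when such a part may legitimately be placed above a given part and about how it contributes to $u$ and $v$. Verifying that the telescoped relation really reduces to $F(a,b,q)=(1+aq)(1+bq)F(aq,bq,q)$, rather than to something messier, and treating the boundary at $k=1,2$ where the parts $1_{ab}$ and $1_{b^2}$ are forbidden, is where the genuine work lies. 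A variant that avoids having to recognise the functional equation would be to prove the full two-variable refinement by induction on $n$ directly from the recurrences, matching $D(u,v,n)$ with the number of pairs consisting of a partition into $u$ distinct parts and a partition into $v$ distinct parts of total size $n$; that trades the algebraic collapse for a longer but more mechanical induction.
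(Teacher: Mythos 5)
Your overall architecture (set up a system of $q$-difference equations from the difference matrix, deduce the functional equation $F(a,b,q)=(1+aq)(1+bq)\,F(aq,bq,q)$ with $F(0,0,q)=1$, and conclude by uniqueness of the formal power series solution) matches the paper's, and your uniqueness argument is fine. The genuine gap is in the middle: you assert that telescoping the recurrences over $k$ ``should'' collapse to the functional equation, but every recurrence you would obtain — whether by peeling off the largest part, as the paper does, or the smallest part, as you propose — relates the truncated generating functions to one another \emph{at the same arguments} $(a,b,q)$, whereas the functional equation you need compares $F$ at $(a,b)$ with $F$ at the \emph{shifted} arguments $(aq,bq)$. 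No amount of telescoping of relations of the form $G_{k}(a,b,q)=G_{k'}(a,b,q)+(\text{monomial})\,G_{j}(a,b,q)$ produces an argument shift; summing them merely expresses $F-1$ as an infinite linear combination of the $G_j(a,b,q)$, which is not the desired equation. Deleting the smallest part does not ``shift everything down by one'' (which is what would introduce $q$'s into $a$ and $b$), and no such shift map is compatible with the parity-dependent colours and the squared colours living only on odd values.

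What is actually needed — and what the paper supplies — is an intermediate identity at the finite, truncated level that already contains a shift, namely Proposition~\ref{keyprop}: for each $k$, $G_{2k+1_{ab}}(a,b,q)=(1+aq)\,G_{2k_{a}}(b,aq,q)$, together with three companions for the truncation points $2k+1_{b^2}$, $2k+2_{ab}$ and $2k+1_{a^2}$. Note the asymmetric substitution $a\mapsto b$, $b\mapsto aq$ (one letter at a time) and the specific correspondence of truncation indices; iterating twice as $k\to\infty$ yields $G_\infty(a,b,q)=(1+aq)(1+bq)G_\infty(aq,bq,q)$. These identities are proved by induction on $k$ using the $q$-difference equations, with base cases $k=1,2,3,4$ verified by computer, and the fourth one requires introducing additional recurrences beyond the basic eight. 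Guessing this finite-level statement — strong enough to be stable under the recurrences, hence to serve as an induction hypothesis — is the real content of the proof, and it is absent from your outline. Your fallback of a direct combinatorial induction on $n$ faces the same obstruction: without such a finite-level invariant there is nothing to induct on.
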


In the original proofs of Alladi, Andrews and Gordon using the method of weighted words, they considered minimal partitions satisfying the difference conditions to obtain the generating functions and concluded by a $q$-series manipulation. In this paper we proceed completely differently as the difference conditions are much more complicated and it is not clear how to obtain the minimal partitions and generating functions, let alone find a $q$-series transformation which would lead to the correct infinite product. The proof of Theorem~\ref{th:nondil} is similar to the one of Theorem~\ref{refinement} in~\cite{Doussesil} and uses $q$-difference equations and recurrences, but here we work at the non-dilated level and keep track of the letters $a$ and $b$ at each step. This shows that $q$-difference equations can be a good approach to finding refinements when the difference conditions are intricate.

By doing the dilations~\eqref{dilat}, we obtain the following new refinement of Siladi\'c's theorem, more precise than Theorem~\ref{refinement}.

\begin{theorem}
\label{th:refdilat}
For $u,v,n \in \N$, let $C_4(u,v,n)$ denote the number of partitions of $n$ into $u$ distinct parts congruent to $1$ modulo $4$ and $v$ distinct parts congruent to $3$ modulo $4$.
Let $D_4(u,v,n)$ denote the number of partitions $\lambda_1 + \cdots + \lambda_s$ of $n$ such that $u$ equals the number of parts congruent to $0$ or $1$ modulo $4$ plus twice the number of parts congruent to $6$ modulo $8$ and $v$ equals the number of parts congruent to $0$ or $3$ modulo $4$ plus twice the number of parts congruent to $2$ modulo $8$, satisfying the following conditions:
\begin{enumerate}
  \item $\forall i \geq 1, \lambda_i \neq 2$,
  \item $\forall i \geq 1, \lambda_i - \lambda_{i+1} \geq 5$,
  \item $\forall i \geq 1$,
  \begin{equation*}
  \begin{aligned}
&\lambda_i - \lambda_{i+1} = 5 \Rightarrow \lambda_i \equiv 1, 4 \mod 8,
\\&\lambda_i - \lambda_{i+1} = 6 \Rightarrow \lambda_i \equiv 1, 3, 5, 7 \mod 8,
\\&\lambda_i - \lambda_{i+1} = 7 \Rightarrow \lambda_i \equiv 0, 1, 3, 4, 6, 7 \mod 8,
\\&\lambda_i - \lambda_{i+1} = 8 \Rightarrow \lambda_i \equiv 0, 1, 3, 4, 5, 7 \mod 8.
	\end{aligned}
	\end{equation*}
\end{enumerate}
Then $C_4(u,v,n)=D_4(u,v,n)$.
\end{theorem}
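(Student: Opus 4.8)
The proof of Theorem~\ref{th:refdilat} will just amount to applying the dilations~\eqref{dilat} to Theorem~\ref{th:nondil}, so the work is a change of variables together with one finite case check. First I would record the image of each coloured integer: under~\eqref{dilat} a part of value $k$ and colour $a$, $b$, $ab$, $a^2$ or $b^2$ becomes $4k-3$, $4k-1$, $4k-4$, $4k-6$ or $4k-2$ respectively. Reading these modulo $8$, and using that $a^2$ and $b^2$ occur only for odd $k$, yields a dictionary: a part $\equiv 1$ or $5 \mod 8$ is a colour-$a$ part with $k$ odd, resp.\ even; $\equiv 3$ or $7 \mod 8$ a colour-$b$ part; $\equiv 0$ or $4 \mod 8$ a colour-$ab$ part; $\equiv 6 \mod 8$ a colour-$a^2$ part; $\equiv 2 \mod 8$ a colour-$b^2$ part. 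This is a bijection between residues modulo $8$ and (colour, parity) pairs, so a partition of $n$ determines the colours of its parts unambiguously and conversely; since every entry of $A$ is at least $1$, consecutive parts always have distinct (integer) values, so the colour ordering is automatically respected and plays no role in admissibility. In particular $1_{ab}$ maps to $0$, which is never a part, and $1_{b^2}$ maps to $2$, so forbidding $1_{ab}$ and $1_{b^2}$ is exactly condition (1). On the product side $\prod_{n\geq 1}(1+aq^n)(1+bq^n)$ becomes $\prod_{n\geq 1}(1+aq^{4n-3})(1+bq^{4n-1})$, the generating function for partitions into distinct parts $\equiv 1 \mod 4$ weighted by $a$ and distinct parts $\equiv 3 \mod 4$ weighted by $b$, that is $\sum C_4(u,v,n)a^ub^vq^n$; and the dictionary turns ``parts $a$ or $ab$ plus twice the parts $a^2$'' into ``parts $\equiv 0$ or $1 \mod 4$ plus twice the parts $\equiv 6 \mod 8$'', which is the $u$-statistic of $D_4$, and likewise for $v$.

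The substantive step is to check that the difference conditions given by the matrix $A$ turn into conditions (2) and (3). If $\lambda_i$ has the colour-parity type of a row $R$ of $A$ with dilation shift $s_R\in\{1,2,3,4,6\}$, and $\lambda_{i+1}$ has the colour of a column $C$ with shift $s_C$, then $k_i-k_{i+1}=\bigl(\lambda_i-\lambda_{i+1}+s_R-s_C\bigr)/4$, so $k_i-k_{i+1}\geq A(R,C)$ is equivalent to $\lambda_i-\lambda_{i+1}\geq 4A(R,C)+s_C-s_R$; moreover the residues of $\lambda_i$ and $\lambda_{i+1}$ fix $\lambda_i-\lambda_{i+1}$ modulo $8$, and the parity of $k_i-k_{i+1}$, which is forced by the two parities involved, may push the bound up inside that class. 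Computing $4A(R,C)+s_C-s_R$ for all $8\times5$ entries, I would find that it is always at least $5$, which gives condition (2); that the value $5$ occurs only for $a_{odd}\to ab$ and $ab_{even}\to b$, so a gap of $5$ forces $\lambda_i\equiv 1,4\mod 8$; the value $6$ only for $a_{odd}\to b$, $b_{odd}\to a$, $a_{even}\to b$, $b_{even}\to a$, forcing $\lambda_i\equiv 1,3,5,7\mod 8$; the value $7$ only in the rows $a_{odd},b_{odd},ab_{even},a^2,b_{even},ab_{odd}$, forcing $\lambda_i\not\equiv 2,5\mod 8$; and the value $8$ only in the rows $a_{odd},b_{odd},ab_{even},a_{even},b_{even},ab_{odd}$, forcing $\lambda_i\not\equiv 2,6\mod 8$ --- these are exactly the four lines of condition (3). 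For the reverse inclusion one checks that, given a partition satisfying (1)--(3) with its induced colouring, every consecutive pair satisfies the $A$-condition: (2) gives $\lambda_i-\lambda_{i+1}\geq 5$, and whenever $4A(R,C)+s_C-s_R$ is larger than $5$, the residue class of $\lambda_i-\lambda_{i+1}$ modulo $8$ together with condition (3) rules out every gap lying strictly between $5$ and that bound.

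The main obstacle is entirely bookkeeping. Because a column of $A$ does not record the parity of $k_{i+1}$, each matrix entry really encodes two dilated conditions, one for $\lambda_{i+1}$ odd and one for $\lambda_{i+1}$ even, and one must confirm that the two subcases together reproduce the single corresponding line of condition (3) and, in the reverse direction, that condition (3) rather than merely the inequality $\lambda_i-\lambda_{i+1}\geq5$ is what recovers the larger entries of $A$. Once this finite verification is done, applying~\eqref{dilat} to the identity of Theorem~\ref{th:nondil} turns its left side into $\sum D_4(u,v,n)a^ub^vq^n$ and its right side into $\prod_{n\geq1}(1+aq^{4n-3})(1+bq^{4n-1})=\sum C_4(u,v,n)a^ub^vq^n$, whence $D_4(u,v,n)=C_4(u,v,n)$ for all $u,v,n$, which is Theorem~\ref{th:refdilat}.
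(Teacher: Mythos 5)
Your proposal is correct and is exactly the paper's route: the paper derives Theorem~\ref{th:refdilat} from Theorem~\ref{th:nondil} purely by applying the dilation~\eqref{dilat}, having noted that this sends the coloured order to the natural order $0_{ab}<1_a<2_{b^2}<3_b<\cdots$ and the matrix-$A$ conditions to conditions (1)--(3). You simply spell out the finite residue-class bookkeeping that the paper leaves implicit, and your dictionary and the computation $\lambda_i-\lambda_{i+1}\geq 4A(R,C)+s_C-s_R$ check out.
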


Moreover, the non-dilated Theorem~\ref{th:nondil} allows to obtain an infinitude of new identities by doing different dilations. For example, the dilation
\begin{equation}
\label{dil2}
\begin{aligned}
q &\rightarrow q^4,\\
a &\rightarrow aq^{-1},\\
b &\rightarrow bq^{-3},
\end{aligned}
\end{equation}
give the following interesting companion of Siladi\'c's theorem.

\begin{theorem}
\label{th:comp}
For $u,v,n \in \N$, let $C'_4(u,v,n)$ denote the number of partitions of $n$ into $u$ distinct parts congruent to $3$ modulo $4$ and $v$ distinct parts congruent to $1$ modulo $4$.
Let $D'_4(u,v,n)$ denote the number of partitions $\lambda_1 + \cdots + \lambda_s$ of $n$ such that $u$ equals the number of parts congruent to $0$ or $3$ modulo $4$ plus twice the number of parts congruent to $2$ modulo $8$ and $v$ equals the number of parts congruent to $0$ or $1$ modulo $4$ plus twice the number of parts congruent to $6$ modulo $8$, satisfying the following conditions:
\begin{enumerate}
  \item $\forall i \geq 1, \lambda_i \neq 2$,
  \item $$\lambda_i - \lambda_{i+1} 
\begin{cases}
=5,6,8,9 \text{ or } \geq 11 \text{ if } \lambda_{i} \equiv 0 \mod 8, \\
=2 \text{ or } \geq 5 \text{ if } \lambda_{i} \equiv 1 \mod 8, \\
=11 \text{ or } \geq 13 \text{ if } \lambda_{i} \equiv 2 \mod 8, \\
\geq 7 \text{ if } \lambda_{} \equiv 3 \mod 8, \\
=5 \text{ or } \geq 7 \text{ if } \lambda_{i} \equiv 4 \mod 8, \\
=2,3,5,6 \text{ or } \geq 8 \text{ if } \lambda_{i} \equiv 5 \mod 8, \\
=3,4,6,7 \text{ or } \geq 9 \text{ if } \lambda_{i} \equiv 6 \mod 8, \\
=8 \text{ or } \geq 10 \text{ if } \lambda_{i} \equiv 7 \mod 8.
\end{cases}$$
\end{enumerate}

Then $C'_4(u,v,n)=D'_4(u,v,n)$.
\end{theorem}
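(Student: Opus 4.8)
The plan is to deduce Theorem~\ref{th:comp} from the non-dilated Theorem~\ref{th:nondil} by performing the dilation~\eqref{dil2}, in exactly the same routine way that Theorem~\ref{th:refdilat} follows from~\eqref{dilat}; the only real work lies in rewriting the coloured difference conditions of the matrix $A$ as conditions on ordinary integers. Under~\eqref{dil2} the right-hand side $\prod_{n\geq 1}(1+aq^n)(1+bq^n)$ of Theorem~\ref{th:nondil} becomes
$$\prod_{n\geq 1}\left(1+aq^{4n-1}\right)\left(1+bq^{4n-3}\right)=\sum_{u,v,n\in\N}C'_4(u,v,n)a^ub^vq^n,$$
since $4n-1\equiv 3$ and $4n-3\equiv 1 \mod 4$. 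So it suffices to show that~\eqref{dil2} turns the coloured partitions counted by $D(u,v,n)$ into the ordinary partitions counted by $D'_4(u,v,n)$.

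First I would record the dictionary between coloured and ordinary parts. The dilation~\eqref{dil2} sends a coloured part $k_c$ to the integer $4k-\delta_c$, where $(\delta_a,\delta_b,\delta_{ab},\delta_{a^2},\delta_{b^2})=(1,3,4,2,6)$. A direct check shows that the residue of $4k-\delta_c$ modulo $8$ determines both the colour $c$ and the parity of $k$: the residues $0,1,2,3,4,5,6,7\mod 8$ come respectively from colours $ab$ ($k$ odd), $b$ ($k$ odd), $a^2$, $a$ ($k$ odd), $ab$ ($k$ even), $b$ ($k$ even), $b^2$, $a$ ($k$ even), so each residue class corresponds to exactly one of the eight rows of $A$. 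In particular a part is coloured $a$ or $ab$ iff it is $\equiv 0$ or $3\mod 4$, coloured $b$ or $ab$ iff it is $\equiv 0$ or $1\mod 4$, coloured $a^2$ iff it is $\equiv 2\mod 8$, and coloured $b^2$ iff it is $\equiv 6\mod 8$; hence the statistics $u$ and $v$ of Theorem~\ref{th:nondil} become exactly those defining $D'_4(u,v,n)$. Moreover the forbidden parts $1_{ab}$ and $1_{b^2}$ are sent to $0$ and $-2$, so they impose no restriction, and $\lambda_i\neq 2$ holds automatically because residue $2\mod 8$ forces colour $a^2$, whose smallest part is $3_{a^2}\mapsto 10$.

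Next I would translate the difference conditions. If $\lambda_i$ has colour $x$ (which fixes a row of $A$ and the parity of $k_i$) and $\lambda_{i+1}$ has colour $y$, then
$$\lambda_i-\lambda_{i+1}=4(k_i-k_{i+1})-\delta_x+\delta_y,$$
where $k_i-k_{i+1}$ ranges over all integers at least $A(x,y)$ compatible with the parity that colour $y$ forces on $k_{i+1}$ (any parity for $y\in\{a,b,ab\}$, odd for $y\in\{a^2,b^2\}$). For each of the eight colours $x$ this produces five arithmetic progressions of common difference $4$ or $8$, and the set of admissible values of $\lambda_i-\lambda_{i+1}$ is their union. The main — and essentially only — obstacle is this bookkeeping: one must verify, for each of the eight residue classes of $\lambda_i$ modulo $8$, that the union of these five progressions is precisely the set listed in Theorem~\ref{th:comp}, and that nothing forbidden by $A$ reappears while nothing admissible is lost. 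For instance, when $\lambda_i\equiv 0\mod 8$ we are in the row of $A$ with entries $(2,3,2,2,3)$, with $\delta_x=4$ and $k_i$ odd, so the progressions are $\{4m-3:m\geq 2\}$, $\{4m:m\geq 2\}$, $\{4m-1:m\geq 3\}$, $\{4m-2:m\text{ even},\,m\geq 2\}$ and $\{4m+2:m\text{ even},\,m\geq 4\}$, whose union is $\{5,6,8,9\}\cup\{d:d\geq 11\}$, matching the first line of the case distinction. Carrying out the seven remaining cases in the same fashion completes the proof.
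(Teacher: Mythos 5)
Your proposal is correct and is exactly the argument the paper intends: Theorem~\ref{th:comp} is obtained by applying the dilation~\eqref{dil2} to Theorem~\ref{th:nondil} (the paper itself gives no further proof), and your dictionary $k_c\mapsto 4k-\delta_c$ with $(\delta_a,\delta_b,\delta_{ab},\delta_{a^2},\delta_{b^2})=(1,3,4,2,6)$, the resulting residue-class/colour correspondence, and the sample translation of the row $ab_{\mathrm{odd}}$ of $A$ all check out against the stated difference conditions.
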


Note that the infinite product in Theorem~\ref{th:nondil} is exactly the same as in Alladi and Gordon's non dilated version of Schur's theorem (Theorem~\ref{schurnondil}). Thus doing the dilation~\eqref{dilschur} in Theorem~\ref{th:nondil} gives exactly the infinite product of Schur's theorem $$\prod_{k=0}^{\infty} (1+aq^{3k+1})(1+bq^{3k+2}),$$ but very different weighted words and difference conditions. Indeed, we have some numbers which appear with``squared'' colours, such as $5_{a^2}$, which Alladi and Gordon never considered and their papers on the method of weighted words.

With the dilation~\eqref{dilschur}, the ordering of integers 
$$1_{ab} < 1_a < 1_{b^2} <1_{b} <2_{ab} < 2_a <3_{a^2} < 2_{b} <3_{ab} < 3_a < 3_{b^2} <3_b < \cdots $$
becomes
$$0_{ab} < 1_a < 1_{b^2} <2_{b} <3_{ab} < 4_a <5_{a^2} < 5_{b} <6_{ab} < 7_a < 7_{b^2} <8_b < \cdots .$$
So the integers congruent to $\pm 1 \mod 6$ can appear in squared colours.
To state the theorem, we will use overpartitions (partitions in which the first occurence of a part may be overlined) and consider that the parts with squared colours are overlined parts. We obtain the following companion of Schur's theorem.

\begin{theorem}
\label{newschur}
For $u,v,n \in \N$, let $C_3(u,v,n)$ denote the number of partitions of $n$ into $u$ distinct parts congruent to $1$ modulo $3$ and $v$ distinct parts congruent to $2$ modulo $3$.
Let $D_3(u,v,n)$ denote the number of overpartitions $\lambda_1 + \cdots + \lambda_s$ of $n$ such that only parts congruent to $\pm 1 \mod 6$ can be overlined, $\overline{1}$ is not a part, and such that $u$ equals the number of parts congruent to $0$ or $1$ modulo $3$ plus twice the number of overlined parts congruent to $5$ modulo $6$ and $v$ equals the number of parts congruent to $0$ or $2$ modulo $3$ plus twice the number of overlined parts congruent to $1$ modulo $6$, satisfying the following conditions:
$$\lambda_i - \lambda_{i+1}  \geq
\begin{cases}
4 + \chi(\overline{\lambda_{i+1}}) \text{ if } \lambda_{i} \equiv 1, 2, 3, 5 \mod 6 \text{ and is not overlined} , \\
5 + \chi(\overline{\lambda_{i+1}}) \text{ if } \lambda_{i} \equiv 0, 4 \mod 6, \\
6 + \chi(\overline{\lambda_{i+1}}) \text{ if } \lambda_{i} \equiv 1, 5 \mod 6 \text{ and is overlined},
\end{cases}$$
where $$\chi(\overline{\lambda_{i+1}}) = 
\begin{cases}
= 1 \text{ if $\lambda_{i+1}$ is overlined} , \\
= 0 \text{ otherwise}.
\end{cases}$$

Then $C_4(u,v,n)=D_4(u,v,n)$.
\end{theorem}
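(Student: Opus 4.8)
The plan is to obtain Theorem~\ref{newschur} as a direct corollary of the non-dilated Theorem~\ref{th:nondil} by applying the dilation~\eqref{dilschur}, namely $q \rightarrow q^3$, $a \rightarrow aq^{-2}$, $b \rightarrow bq^{-1}$. Under this substitution the generating function identity of Theorem~\ref{th:nondil} becomes exactly $\sum D(u,v,n) a^u b^v q^{3n-2u-v} = \prod_{n\geq 1}(1+aq^{3n-2})(1+bq^{3n-1})$, and the right-hand side is the generating function for pairs of partitions into distinct parts congruent to $1$ and $2$ modulo $3$, weighted by $a$, $b$ and $q$; this is precisely $\sum C_3(u,v,n) a^u b^v q^n$. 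So the entire content of the theorem is to check that the left-hand side, after dilation, is the generating function for the overpartitions $D_3(u,v,n)$ described in the statement.

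First I would record the effect of the dilation on a single coloured part: a part $k_x$ contributes $q^{3k}$ times the appropriate power of $a$ and $b$ coming from its colour, but the exponent of $q$ is shifted by $-2$ for each ``$a$-unit'' of colour and by $-1$ for each ``$b$-unit''. Thus $k_{ab} \mapsto 3k-3$, $k_a \mapsto 3k-2$, $k_b \mapsto 3k-1$, $k_{a^2} \mapsto 3k-4$, $k_{b^2}\mapsto 3k-2$. Reading this together with the colour order $1_{ab} < 1_a < 1_{b^2} < 1_b < 2_{ab} < \cdots$, one gets the claimed ordering $0_{ab} < 1_a < 1_{b^2} < 2_b < 3_{ab} < 4_a < 5_{a^2} < 5_b < \cdots$; in particular a part of underlying size $\equiv 0 \pmod 3$ comes from colour $ab$, a part $\equiv 1 \pmod 3$ from colour $a$ or (if ``squared'') $b^2$, and a part $\equiv 2 \pmod 3$ from colour $b$ or (if squared) $a^2$. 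Since $a^2$ and $b^2$ only occur at odd coloured integers $k$ with $k$ itself odd, after dilation the squared-colour parts are exactly certain parts $\equiv \pm 1 \pmod 6$, and I would designate these as the overlined parts. The statistic bookkeeping then translates directly: a squared $b^2$ part counts as two $a$-units hence contributes $2$ to $u$, and lands $\equiv 1 \pmod 6$; a squared $a^2$ part contributes $2$ to $v$ and lands $\equiv 5 \pmod 6$; ordinary parts $\equiv 0,1 \pmod 3$ contribute $1$ to $u$ and $\equiv 0,2 \pmod 3$ contribute $1$ to $v$, matching the definition of $D_3$. The excluded parts $1_{ab}$ and $1_{b^2}$ dilate to the forbidden parts $0$ (nonexistent as a part) and $\overline{1}$, consistent with ``$\overline{1}$ is not a part''.

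The main work, and the step I expect to be the real obstacle, is to verify that the difference matrix $A$ of Theorem~\ref{th:nondil} translates, under the size shift above, into precisely the difference conditions of Theorem~\ref{newschur}, including the $\chi(\overline{\lambda_{i+1}})$ correction term. Concretely, if $\lambda_i$ has colour $x$ and underlying size $k$ and $\lambda_{i+1}$ has colour $y$ and underlying size $\ell$, then $\lambda_i - \lambda_{i+1}$ after dilation equals $3(k-\ell) + (\text{shift}(y) - \text{shift}(x))$, where shift$(ab)=-3$, shift$(a)=$ shift$(b^2)=-2$, shift$(b)=-1$, shift$(a^2)=-4$; the constraint $k-\ell \geq A(x,y)$ becomes a lower bound on $\lambda_i - \lambda_{i+1}$ depending only on the residues of $\lambda_i,\lambda_{i+1}$ modulo $6$ and on whether each is overlined. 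I would organize this as a finite case check: the eight rows of $A$ correspond to the eight ``types'' of $\lambda_i$ ($a_{odd}, b^2, b_{odd}, ab_{even}, a_{even}, a^2, b_{even}, ab_{odd}$), which after dilation become the residues $1,2,3,6,0 \text{ or } 4,5,2,3 \pmod 6$ refined by overlined-or-not; for each, running over the five possible colours of $\lambda_{i+1}$ (collapsed by the shift into ``overlined'' versus ``not'', since $b$ and $b^2$, and likewise one pairs up, shift differently only within the squared classes) produces exactly the three-way case split $4,5,6$ plus $\chi$. The delicate points are (i) checking that the parity constraints in the rows of $A$ (``even'' vs.\ ``odd'' underlying size) are automatically compatible with the residues mod $6$ one assigns, so no spurious cases survive, and (ii) confirming the $+\chi(\overline{\lambda_{i+1}})$ term arises uniformly, i.e.\ that passing from a non-squared colour of $\lambda_{i+1}$ to the squared one always increases the dilated difference bound by exactly $1$ while keeping the residue of $\lambda_{i+1}$ in the same mod-$6$ class up to the overline. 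Once this table is checked entry by entry, Theorem~\ref{newschur} follows immediately from Theorem~\ref{th:nondil}. (I note that the statement's final line ``$C_4(u,v,n)=D_4(u,v,n)$'' is a typo for $C_3(u,v,n)=D_3(u,v,n)$.)
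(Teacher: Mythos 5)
Your route is the same as the paper's: Theorem~\ref{newschur} is obtained purely by applying the dilation~\eqref{dilschur} to Theorem~\ref{th:nondil} and translating the ordering, the statistics and the difference matrix $A$; the paper itself only sketches this translation, so your more explicit plan is in order. Two bookkeeping slips to fix, though. First, the dilated difference is $3(k-\ell)+\mathrm{shift}(x)-\mathrm{shift}(y)$, not $\mathrm{shift}(y)-\mathrm{shift}(x)$ (check it on $k_a$ over $\ell_{ab}$, which gives $3(k-\ell)+1$). Second, you have attached the squared colours to the wrong statistics: in Theorem~\ref{th:nondil} a part coloured $a^2$ contributes $2$ to $u$ and dilates to an overlined part $\equiv 5 \bmod 6$, while a part coloured $b^2$ contributes $2$ to $v$ and dilates to an overlined part $\equiv 1 \bmod 6$ --- exactly as in the statement of Theorem~\ref{newschur} --- whereas you swapped $u$ and $v$. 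Finally, when you carry out the finite case check, note that the bounds $4,5,6$ are not the literal images of the entries of $A$: for instance row $a_{\mathrm{odd}}$, column $a$ has entry $2$, which dilates to a difference $\geq 6$, but since both parts are then $\equiv 1 \bmod 3$ the difference is a multiple of $3$ and the stated bound $\geq 4$ is equivalent to it; the uniform three-case statement only emerges after using the residues of the two parts to restrict the possible values of the difference.
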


Let us now describe the proof of Theorem~\ref{th:nondil}.

\section{Idea of the proof}
To prove Theorem~\ref{th:nondil}, we proceed as follows.

For $n,k,u,v \in \N$, let $d_k(u,v,n)$ (resp. $e_k(u,v,n)$) denote the number of partitions $\lambda_1+\cdots+\lambda_s$ counted by $D(u,v,n)$ such that the largest part $\lambda_1$ is at most (resp. is equal to) $k$.
We define, for $|a|, |b|, |q|<1$, $k \in \N^*$, 
\begin{equation*}
G_k (a,b,q) = 1+ \sum_{u=0}^{\infty} \sum_{v=0}^{\infty} \sum_{n=1}^{\infty} d_k(u,v,n) a^u b^v q^n.
\end{equation*}
Thus $G_{\infty}(a,b,q):= \lim_{k \rightarrow \infty} G_k (a,b,q)$ is the generating function for the partitions counted by $D(u,v,n)$, as there is no more restriction on the size of the largest part.

We start by giving recurrences for $d_k(u,v,n)$ by using the difference conditions of matrix $A$ and combinatorial reasoning on the largest part of the partitions. Then we use them to obtain simple $q$-difference equations for $G_k (a,b,q)$. This is done in Section~\ref{qdiff}.

Our goal is to prove the following key proposition. This is done by induction in Section~\ref{induction}, using the above-mentioned $q$-difference equations.

\begin{proposition}
\label{keyprop}
For all $k \in \N^*,$
\begin{align}
G_{2k+1_{ab}}(a,b,q) &= (1+aq) G_{2k_{a}}(b,aq,q), \label{key1}\\
G_{2k+1_{b^2}}(a,b,q) &= (1+aq) G_{2k_{b}}(b,aq,q), \label{key2}\\
G_{2k+2_{ab}}(a,b,q) &= (1+aq) G_{2k+1_{a}}(b,aq,q), \label{key3}\\
G_{2k+1_{a^2}}(a,b,q) &= (1+aq) G_{2k-1_{b}}(b,aq,q). \label{key4}
\end{align}
\end{proposition}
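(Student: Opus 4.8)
The plan is to prove the four identities \eqref{key1}--\eqref{key4} simultaneously by induction on $k$, the only input being the $q$-difference equations for the $G_c$ established in Section~\ref{qdiff}. Those equations have the shape
\[
G_c(a,b,q) = G_{c^-}(a,b,q) + w(c)\,G_{f(c)}(a,b,q),
\]
where $c^-$ is the immediate predecessor of $c$ in the ordering, $w(c)$ is the monomial weight of $c$ (so $w(m_a)=aq^m$, $w(m_{ab})=abq^m$, $w(m_{a^2})=a^2q^m$, and so on), and $f(c)$ is the largest coloured integer that can serve as the second largest part of a partition whose largest part is $c$: if $c=m_x$, then $f(c)$ is the largest coloured integer $n_y$ with $n\le m-A(x,y)$ (maximising over the colour $y$ as well), with the convention that $G_{f(c)}=1$ when no admissible second part exists. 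The substitution $\sigma\colon (a,b,q)\mapsto(b,aq,q)$ appearing on the right of \eqref{key1}--\eqref{key4} turns each such equation into the corresponding equation for $G_c(b,aq,q)$, and applying $\sigma$ to these equations is the only operation performed on the right-hand sides.

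For the base case $k=1$ only the coloured integers up through $4_{ab}$ are involved, so every $G_c$ that occurs is a finite sum over the handful of admissible partitions built from these integers — this is precisely where the exclusion of $1_{ab}$ and $1_{b^2}$ enters — and the four identities are checked by direct expansion (for example $G_{1_b}(a,b,q)=1+aq+bq$ since $1_a$ and $1_b$ cannot coexist, and one verifies $G_{3_{a^2}}(a,b,q)=(1+aq)G_{1_b}(b,aq,q)$ by hand).

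For the inductive step I would prove the four identities for $k$ in a fixed order — say \eqref{key4}, then \eqref{key1}, then \eqref{key2}, then \eqref{key3} — chosen so that the proof of each may invoke both those already obtained at level $k$ and all of \eqref{key1}--\eqref{key4} at levels $<k$. To establish one of them, say \eqref{key4}, one expands $G_{(2k+1)_{a^2}}$ through its $q$-difference equation and then repeatedly expands each $G_c$ that appears on the right, substituting the induction hypothesis whenever possible, until the right-hand side is a combination of terms $(1+aq)\,G_c(b,aq,q)$; separately one rewrites $G_{(2k-1)_b}(b,aq,q)$ by applying $\sigma$ to the $q$-difference equations for $G_{(2k-1)_b}$ and its predecessors and again using the hypothesis; comparing the two expansions term by term, and using $aq^m+a^2q^{m+1}=aq^m(1+aq)$ (and its analogue in $b$) to gather the scalar factors, the two sides coincide. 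Since the $a$- and $b$-coloured generating functions reduce through their $q$-difference equations to the $ab$-, $a^2$- and $b^2$-coloured ones that the proposition is about, and indices strictly decrease at each expansion, the process terminates at the base case; the remaining three identities are treated the same way.

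The content of the proof is bookkeeping rather than ideas. The ordering is $8$-periodic and $A$ has $8\times5$ entries, so for each identity one must keep track of exactly which coloured integers $f(c)$ arise, whether a given intermediate $G$ is evaluated at $(a,b,q)$ or at $(b,aq,q)$, and how the extra powers of $q$ produced by $\sigma$ — which sends $b\mapsto aq$ and hence shifts the weight of every $b$-coloured part — align with the gaps prescribed by $A$. Pinning down an order on the four sub-claims with no circular dependence, and checking that in every case the leftover monomials really do assemble into a single factor $1+aq$ (and not, say, $1+aq^2$ or an irreducible two-term expression), is the step I expect to be the main obstacle.
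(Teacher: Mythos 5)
Your plan coincides in outline with the paper's own proof (induction on $k$, the eight $q$-difference equations of Lemma~\ref{equations}, the substitution $(a,b)\mapsto(b,aq)$, and an ordering of the four sub-claims in which \eqref{key4} at level $k$ is made available before \eqref{key1}--\eqref{key3} at that level), but what you have written is a strategy rather than a proof: the term-by-term cancellations you defer as ``bookkeeping'' are the entire mathematical content, and two of your structural assumptions are off. First, the $q$-difference equations do not all have the one-correction-term shape $G_c=G_{c^-}+w(c)\,G_{f(c)}$: for $c=2k+2_{ab}$, $2k+2_a$ and $2k+3_{a^2}$ the set of admissible second-largest parts is not an interval in the colour ordering (below $2k+2_{ab}$ one may place $2k_b$ or anything at most $2k_a$, but not $2k+1_{a^2}$, which lies between them), so equations \eqref{eq5}--\eqref{eq7} carry \emph{two} correction terms, the second coming from a forced third part; an expansion scheme built on the single-term model will not assemble into a factor $(1+aq)$. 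Second, a base case at $k=1$ alone cannot launch the induction: the substitutions repeatedly shift $k\to k-1$ and $k\to k-2$ (for instance \eqref{key3} at level $k$ invokes \eqref{key3} at level $k-2$, and the $q$-difference equations themselves only hold for $k\ge 1$), which is why the paper verifies $k=1,2,3,4$ explicitly before the inductive step.

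The most serious gap is in \eqref{key4}. The paper notes that the eight $q$-difference equations are in principle sufficient there but that a proof by blind expansion ``would imply too many substitutions and be very long''; instead it returns to the combinatorics and derives the new identity
$$G_{2k+3_{a^2}}(a,b,q)=G_{2k+2_{ab}}(a,b,q)+aq^{2k+2}G_{2k+1_{ab}}(a,b,q)+a^2q^{2k+3}G_{2k_{ab}}(a,b,q)$$
by introducing fresh recurrences for the quantities $e_k(u,v,n)$ (partitions whose largest part is \emph{exactly} $k$) and matching them in pairs. Your proposal contains no substitute for this step: you assert that repeated expansion terminates in a combination of terms $(1+aq)\,G_c(b,aq,q)$, but you neither exhibit the chains of substitutions for any of the four identities nor argue that the leftover monomials cancel, and for \eqref{key4} it is precisely here that a new idea, not more bookkeeping, was required. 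Until those four computations are actually carried out, the proposal does not establish the proposition.
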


Indeed we can then let $k$ go to infinity and deduce 

\begin{align*}
G_{\infty}(a,b,q) &= (1+aq) G_{\infty} (b,aq,q)\\
&= (1+aq)(1+bq) G_{\infty} (aq,bq,q)\\
&= (1+aq)(1+bq)(1+aq^2)(1+bq^2) G_{\infty} (aq^2,bq^2,q)\\
&= \cdots \\
&= \prod_{k=1}^{\infty} \left(1+ aq^{k}\right)(1+bq^k) \times G_{\infty}(0,0,q)\\
&= \prod_{k=1}^{\infty} \left(1+ aq^{k}\right)(1+bq^k).
\end{align*}
This is the generating function for partitions into distinct parts coloured $a$ or $b$, and it proves Theorem~\ref{th:nondil}.

\section{Recurrences and $q$-difference equations}
\label{qdiff}
We now use combinatorial reasoning on the largest part of partitions to state some recurrences. We have the following identities:

\begin{lemma}
\label{eqd}
For all $u,v \in \N$, $k,n \in \N^*,$ we have
\begin{equation}
\label{eqd1}
d_{2k+1_{ab}}(u,v,n)=d_{2k_{b}}(u,v,n)+d_{2k-1_{a}}(u-1,v-1,n-2k-1),
\end{equation}
\begin{equation}
\label{eqd2}
d_{2k+1_{a}}(u,v,n)=d_{2k+1_{ab}}(u,v,n)+d_{2k_{ab}}(u-1,v,n-2k-1),
\end{equation}
\begin{equation}
\label{eqd3}
d_{2k+1_{b^2}}(u,v,n)=d_{2k+1_{a}}(u,v,n)+d_{2k-1_{a}}(u,v-2,n-2k-1),
\end{equation}
\begin{equation}
\label{eqd4}
d_{2k+1_{b}}(u,v,n)=d_{2k+1_{b^2}}(u,v,n)+d_{2k_{a}}(u,v-1,n-2k-1),
\end{equation}
\begin{equation}
\label{eqd5}
\begin{aligned}
d_{2k+2_{ab}}(u,v,n)= &\ d_{2k+1_{b}}(u,v,n)+d_{2k_{a}}(u-1,v-1,n-2k-2)\\
&+d_{2k-1_{a}}(u-1,v-2,n-4k-2),
\end{aligned}
\end{equation}
\begin{equation}
\label{eqd6}
\begin{aligned}
d_{2k+2_{a}}(u,v,n)=& \ d_{2k+2_{ab}}(u,v,n)+d_{2k_{a}}(u-1,v,n-2k-2)\\&+d_{2k-1_{a}}(u-1,v-1,n-4k-2),
\end{aligned}
\end{equation}
\begin{equation}
\label{eqd7}
\begin{aligned}
d_{2k+3_{a^2}}(u,v,n)=&\ d_{2k+2_{a}}(u,v,n)+d_{2k_{a}}(u-2,v,n-2k-3)\\&+d_{2k-1_{a}}(u-2,v-1,n-4k-3),
\end{aligned}
\end{equation}
\begin{equation}
\label{eqd8}
d_{2k+2_{b}}(u,v,n)=d_{2k+3_{a^2}}(u,v,n)+d_{2k+1_{a}}(u,v-1,n-2k-2).
\end{equation}
\end{lemma}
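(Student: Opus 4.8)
The plan is to prove all eight identities of Lemma~\ref{eqd} by one combinatorial device: sorting the partitions enumerated on the left-hand side according to the exact value of the largest part. Throughout I use the total order on coloured integers displayed just before Theorem~\ref{th:nondil}, together with the conventions that $d_j(u,v,n)=0$ whenever $u<0$, $v<0$ or $n<0$, and that no partition counted by any $d_j$ contains a part $1_{ab}$ or $1_{b^2}$; these conventions make the small-$k$ instances automatically consistent.

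The first point is that the eight coloured integers
\[
(2k+1)_{ab}<(2k+1)_a<(2k+1)_{b^2}<(2k+1)_b<(2k+2)_{ab}<(2k+2)_a<(2k+3)_{a^2}<(2k+2)_b
\]
are consecutive in the order. Hence in every identity the partitions whose largest part is \emph{strictly} below the stated bound $j$ are precisely those counted by $d_{j'}(u,v,n)$, where $j'$ is the coloured integer immediately preceding $j$; this yields the first term on each right-hand side. It remains to count the partitions whose largest part $\lambda_1$ equals $j$. Deleting $\lambda_1$ leaves a partition of $n$ minus the value of $j$, still obeying the difference conditions given by $A$; one records (i) the effect of $\lambda_1$ on $(u,v)$ — a part coloured $a,b,ab,a^2,b^2$ lowers $(u,v)$ by $(1,0),(0,1),(1,1),(2,0),(0,2)$ respectively — and (ii) which values the new largest part $\lambda_2$ may take, read off the row of $A$ indexed by the colour of $j$ and its parity: for each of the five colours the corresponding entry is the minimal allowed difference from $j$, hence bounds the largest admissible $\lambda_2$ of that colour.

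In five of the eight cases — \eqref{eqd1}, \eqref{eqd2}, \eqref{eqd3}, \eqref{eqd4} and \eqref{eqd8} — the set of admissible $\lambda_2$ is a down-set of the order, $\{\lambda_2\le j''\}$ for one of the eight integers $j''$ above (with a suitable value of $k$), and one reads off directly the remaining single term $d_{j''}(\dots)$. The three cases \eqref{eqd5}, \eqref{eqd6}, \eqref{eqd7}, where $j$ equals $(2k+2)_{ab}$, $(2k+2)_a$ or $(2k+3)_{a^2}$, are the subtle ones: here the set of admissible $\lambda_2$ is \emph{not} a down-set, because the coloured integer $(2k+1)_{a^2}$, which lies between $(2k)_a$ and $(2k)_b$ in the order, is barred for $\lambda_2$ (the difference $A$ requires before an $a^2$ part forces $\lambda_2\le(2k-1)_{a^2}$), whereas $(2k)_b$ is still allowed. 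The admissible set is thus $\{\lambda_2\le(2k)_a\}\cup\{\lambda_2=(2k)_b\}$: the first piece contributes $d_{2k_a}(\dots)$, and for the second piece one deletes $(2k)_b$ as well, adjusts $(u,v,n)$ once more, and reads the row $b_{\mathrm{even}}$ of $A$, which leaves $\lambda_3$ bounded by $(2k-1)_a$, giving the third term $d_{2k-1_a}(\dots)$. Tracking the weights — $n$ minus the value of $j$, and in the three-term cases $n-2k$ minus the value of $j$ — together with the colour bookkeeping, then gives \eqref{eqd1}--\eqref{eqd8} exactly.

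Conceptually nothing here is hard; the work is pure bookkeeping, and that is where the only real difficulty lies. One must go through the eight rows of $A$ without a slip, and in particular handle the ``squared'' colours $a^2$ and $b^2$ with care, since they occur only at odd levels and are interleaved with parts of neighbouring levels — for instance $(2k-1)_{a^2}$ lies below $(2k-1)_a$, whereas $(2k+1)_{a^2}$ lies between $(2k)_a$ and $(2k)_b$ — so one must be careful about which coloured integer precedes which. A single misread entry of $A$ would corrupt one recurrence and hence the induction carried out in Section~\ref{induction}, so the eight cases are best verified one at a time.
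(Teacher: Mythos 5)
Your proposal is correct and takes essentially the same approach as the paper: the paper likewise sorts by the largest part, removes it, and reads the relevant row of $A$ to bound $\lambda_2$, proving \eqref{eqd1} and \eqref{eqd5} explicitly and noting the other cases are analogous. Your identification of why the three-term recurrences arise --- the admissible set $\{\lambda_2\le (2k)_a\}\cup\{(2k)_b\}$ fails to be a down-set because $(2k+1)_{a^2}$ is barred, forcing a second removal of $(2k)_b$ --- matches the paper's separate treatment of the $e_{2k_b}$ term.
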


\begin{proof}
We prove equations~\eqref{eqd1} and~\eqref{eqd5}. Equations~\eqref{eqd2},~\eqref{eqd3},~\eqref{eqd4} and~\eqref{eqd8} are proved in the same way as equation~\eqref{eqd1}, and equations~\eqref{eqd6} and~\eqref{eqd7} in the same way as equation~\eqref{eqd5}.

Let us start with \eqref{eqd1}. We divide the set of partitions enumerated by $d_{2k+1_{ab}}(u,v,n)$ into two sets, those with largest part less than $2k+1_{ab}$ and those with largest part equal to $2k+1_{ab}$. Thus
\begin{equation*}
d_{2k+1_{ab}}(u,v,n)=d_{2k_{b}}(u,v,n) + e_{2k+1_{ab}}(u,v,n).
\end{equation*}
Let us now consider a partition $\lambda_1+ \lambda_2 + \cdots + \lambda_s$ counted by $ e_{2k+1_{ab}}(u,v,n).$ By the difference conditions given by the matrix $A$, $\lambda_2 \leq 2k-1_a$. Let us remove the largest part $\lambda_1=2k+1_{ab}$. The largest part is now $\lambda_2 \leq 2k-1_a$, the number partitioned is $n-(2k+1)$, and we removed a part coloured $ab$ so $u$ becomes $u-1$ and $v$ becomes $v-1$. We obtain a partition counted by $d_{2k-1_{a}}(u-1,v-1,n-2k-1)$. This process is reversible, so we have a bijection between partitions counted by $e_{2k+1_{ab}}(u,v,n)$ and those counted by $d_{2k-1_{a}}(u-1,v-1,n-2k-1)$.
Therefore $$e_{2k+1_{ab}}(u,v,n)=d_{2k-1_{a}}(u-1,v-1,n-2k-1)$$ for all $u,v \in \N$, $k,n \in \N^*$ and~\eqref{eqd1} is proved.

Let us now prove~\eqref{eqd5}.
Again let us divide the set of partitions enumerated by $d_{2k+2_{ab}}(u,v,n)$ into two sets, those with largest part less than $2k+2_{ab}$ and those with largest part equal to $2k+2_{ab}$. Thus
\begin{equation*}
d_{2k+2_{ab}}(u,v,n)=d_{2k+1_{b}}(u,v,n)+e_{2k+2_{ab}}(u,v,n).
\end{equation*}
Let us now consider a partition $\lambda_1+ \lambda_2 + \cdots + \lambda_s$ counted by $e_{2k+2_{ab}}(u,v,n)$. By the difference conditions of the matrix $A$, $\lambda_2=2k_b$ or $\lambda_2 \leq 2k_a$. Let us remove the largest part $\lambda_1=2k+2_{ab}$. If $\lambda_2=2k_b$, we obtain a partition counted by $e_{2k_b}(u-1,v-1,n-(2k+2))$. If $\lambda_2 \leq 2k_a$, we obtain a partition counted by $d_{2k_a}(u-1,v-1,n-(2k+2)).$ This process is also reversible and the following holds:
\begin{equation*}
e_{2k+2_{ab}}(u,v,n)= e_{2k_b}(u-1,v-1,n-(2k+2)) + d_{2k_a}(u-1,v-1,n-(2k+2)).
\end{equation*}
Moreover, again by removing the largest part, we can prove that $$e_{2k_b}(u-1,v-1,n-(2k+2))=d_{2k-1_a}(u-1,v-2,n-4k-2).$$
This concludes the proof of~\eqref{eqd5}.
\end{proof}

The equations of Lemma~\ref{eqd} lead to the following $q$-difference equations:

\begin{lemma}
For all $k \in \N^*,$
\label{equations}
\begin{equation}
\label{eq1}
G_{2k+1_{ab}}(a,b,q)=G_{2k_{b}}(a,b,q)+abq^{2k+1} G_{2k-1_{a}}(a,b,q),
\end{equation}
\begin{equation}
\label{eq2}
G_{2k+1_a}(a,b,q)=G_{2k+1_{ab}}(a,b,q)+aq^{2k+1} G_{2k_{ab}}(a,b,q),
\end{equation}
\begin{equation}
\label{eq3}
G_{2k+1_{b^2}}(a,b,q)=G_{2k+1_{a}}(a,b,q)+b^2q^{2k+1}G_{2k-1_{a}}(a,b,q),
\end{equation}
\begin{equation}
\label{eq4}
G_{2k+1_{b}}(a,b,q)=G_{2k+1_{b^2}}(a,b,q)+bq^{2k+1}G_{2k_{a}}(a,b,q),
\end{equation}
\begin{equation}
\label{eq5}
G_{2k+2_{ab}}(a,b,q)= G_{2k+1_{b}}(a,b,q)+abq^{2k+2}G_{2k_{a}}(a,b,q)+ab^2q^{4k+2}G_{2k-1_{a}}(a,b,q),
\end{equation}
\begin{equation}
\label{eq6}
G_{2k+2_{a}}(a,b,q)= G_{2k+2_{ab}}(a,b,q)+ aq^{2k+2} G_{2k_{a}}(a,b,q)+ abq^{4k+2}G_{2k-1_{a}}(a,b,q),
\end{equation}
\begin{equation}
\label{eq7}
G_{2k+3_{a^2}}(a,b,q)=G_{2k+2_{a}}(a,b,q)+a^2q^{2k+3}G_{2k_{a}}(a,b,q)+a^2bq^{4k+3}G_{2k-1_{a}}(a,b,q),
\end{equation}
\begin{equation}
\label{eq8}
G_{2k+2_{b}}(a,b,q)=G_{2k+3_{a^2}}(a,b,q)+bq^{2k+2}G_{2k+1_{a}}(a,b,q).
\end{equation}
\end{lemma}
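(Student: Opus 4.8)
The plan is to obtain the eight $q$-difference equations of Lemma~\ref{equations} by turning each recurrence of Lemma~\ref{eqd} into an identity between generating functions. Fix $k\in\N^*$, and take \eqref{eqd1} as the model case. I would multiply both sides of \eqref{eqd1} by $a^{u}b^{v}q^{n}$ and sum over all $u,v\in\N$ and all $n\in\N^*$. By the definition of $G_k(a,b,q)$, the left-hand side becomes $G_{2k+1_{ab}}(a,b,q)-1$, and the term $\sum_{u,v,n}d_{2k_{b}}(u,v,n)a^{u}b^{v}q^{n}$ becomes $G_{2k_{b}}(a,b,q)-1$.

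Every remaining term appearing in Lemma~\ref{eqd} has the shape $d_{j}(u-\alpha,\,v-\beta,\,n-\gamma)$ for some colour-indexed $j$ and some $\alpha,\beta,\gamma\in\N$ that can be read directly off the arguments — for instance $(\alpha,\beta,\gamma)=(1,1,2k+1)$ and $j=2k-1_{a}$ for the second term of \eqref{eqd1}. For such a term I would substitute $u\mapsto u+\alpha$, $v\mapsto v+\beta$, $n\mapsto n+\gamma$; with the convention that $d_{j}$ vanishes as soon as one of its arguments is negative, this produces exactly $a^{\alpha}b^{\beta}q^{\gamma}\sum_{u,v\in\N,\ n\in\N}d_{j}(u,v,n)a^{u}b^{v}q^{n}$, the sum over $n$ now starting at $0$. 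Since the only partition of $0$ is the empty one, the slice $n=0$ of this triple sum equals $1$, so the triple sum is precisely $G_{j}(a,b,q)$, and the term contributes $a^{\alpha}b^{\beta}q^{\gamma}G_{j}(a,b,q)$. Applied to \eqref{eqd1} this gives $G_{2k+1_{ab}}-1=(G_{2k_{b}}-1)+abq^{2k+1}G_{2k-1_{a}}$, which is \eqref{eq1}. The same bookkeeping applied to \eqref{eqd2}, \eqref{eqd3}, \eqref{eqd4} and \eqref{eqd8} yields \eqref{eq2}, \eqref{eq3}, \eqref{eq4} and \eqref{eq8}, while the three-term recurrences \eqref{eqd5}, \eqref{eqd6} and \eqref{eqd7} yield \eqref{eq5}, \eqref{eq6} and \eqref{eq7} in the same way, now producing three summands whose shift vectors $(\alpha,\beta,\gamma)$ are again read off the arguments.

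I do not expect a genuine obstacle: the argument is a routine translation of recurrences into $q$-difference equations, valid for every $k\in\N^*$ since all the subscripts occurring (down to $2k-1_{a}=1_{a}$ when $k=1$) correspond to well-defined generating functions. The one point that calls for care is the handling of the constant terms: each $G$ carries an additive $1$ from the empty partition, and one must check that the $1$ produced by the left-hand side cancels the $1$ from the first right-hand summand, whereas every shifted sum contributes a \emph{full} $G_{j}$, its constant term included, precisely because after the shift the value $n=0$ genuinely lies in the range of summation. Equivalently, one could verify the eight identities coefficient by coefficient in $a^{u}b^{v}q^{n}$, where for $n\geq1$ they reduce verbatim to Lemma~\ref{eqd} and for $n=0$ they are trivial; I would nevertheless present the generating-function computation, which is the shorter of the two.
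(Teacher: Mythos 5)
Your proposal is correct and is exactly the argument the paper intends: the paper gives no separate proof of Lemma~\ref{equations}, presenting it as the immediate generating-function translation of Lemma~\ref{eqd} obtained by multiplying by $a^u b^v q^n$ and summing, which is precisely your computation. Your careful handling of the constant terms (the $1$'s cancelling on the unshifted terms while each shifted sum picks up a full $G_j$ including its $n=0$ slice) is the only point needing attention, and you treat it correctly.
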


These $q$-difference equations completely characterise the difference conditions and allow one to prove directly three cases of Proposition~\ref{keyprop}. We will introduce some more in the proof of the fourth case for convenience.

\section{Proof of Proposition~\ref{keyprop}}
\label{induction}
In this section we prove Proposition~\ref{keyprop} by induction. The proof resembles the one of~\cite{Doussesil}, but here we consider the non-dilated coloured integers and we keep track of the letters $a$ and $b$.

\subsection{Initialisation}
First we check some initial cases.

With the initial conditions
\begin{equation*}
G_{1_{ab}}(a,b,q)=1,
\end{equation*}
\begin{equation*}
G_{1_a}(a,b,q)=1 +aq,
\end{equation*}
\begin{equation*}
G_{1_{b^2}}(a,b,q)=1 +aq,
\end{equation*}
\begin{equation*}
G_{1_b}(a,b,q)=1 +aq+bq,
\end{equation*}
\begin{equation*}
G_{2_{ab}}(a,b,q)=1 +aq+bq+abq^2,
\end{equation*}
\begin{equation*}
G_{2_{a}}(a,b,q)=1 +aq+bq+abq^2+aq^2,
\end{equation*}
\begin{equation*}
G_{3_{a^2}}(a,b,q)=1 +aq+bq+abq^2+aq^2+a^2q^3,
\end{equation*}
\begin{equation*}
G_{2_{b}}(a,b,q)=1 +aq+bq+abq^2+aq^2+a^2q^3+bq^2+abq^3,
\end{equation*}
and equations~\eqref{eq1}-\eqref{eq8}, we use Maple to check that Proposition~\ref{keyprop} is verified for $k=1,2,3,4.$

Let us now assume that Proposition~\ref{keyprop} is true for all $\ell \leq k-1$ and show that it is also satisfied for $k$. To do so, we will prove the $4$ different equations~\eqref{key1}--\eqref{key4}.

\subsection{Equation~\eqref{key1}}
We start by proving that $$G_{2k+1_{ab}}(a,b,q) = (1+aq) G_{2k_{a}}(b,aq,q).$$

Replacing $k$ by $k-1$ in~\eqref{eq8} and substituting into~\eqref{eq1}, we obtain
\begin{equation}
\label{plic}
G_{2k+1_{ab}}(a,b,q)=G_{2k+1_{a^2}}(a,b,q)+\left(bq^{2k}+abq^{2k+1}\right) G_{2k-1_{a}}(a,b,q).
\end{equation}
We now replace $k$ by $k-1$ in~\eqref{eq2} and substitute into~\eqref{plic}. This gives
\begin{align*}
G_{2k+1_{ab}}(a,b,q)=& \ G_{2k+1_{a^2}}(a,b,q)+\left(bq^{2k}+abq^{2k+1}\right) G_{2k-1_{ab}}(a,b,q) \\
&+\left(abq^{4k-1}+a^2bq^{4k}\right)G_{2k-2_{ab}}(a,b,q).
\end{align*}
Then by the induction hypothesis,
\begin{equation}
\label{ploc}
\begin{aligned}
G_{2k+1_{ab}}(a,b,q)=(1+aq)\Big(&G_{2k-1_b}(b,aq,q)+\left(bq^{2k}+abq^{2k+1}\right)G_{2k-2_a}(b,aq,q)
\\&+\left(abq^{4k-1}+a^2bq^{4k}\right)G_{2k-3_a}(b,aq,q)\Big).
\end{aligned}
\end{equation}
Replacing $k$ by $k-1$ in~\eqref{eq5}, we obtain
\begin{equation}
\label{eq1*}
G_{2k_{ab}}(a,b,q)= G_{2k-1_{b}}(a,b,q)+abq^{2k}G_{2k-2_{a}}(a,b,q)+ab^2q^{4k-2}G_{2k-3_{a}}(a,b,q).
\end{equation}
Replacing $k$ by $k-1$ in~\eqref{eq6} gives
\begin{equation}
\label{eq2*}
G_{2k_{a}}(a,b,q)= G_{2k_{ab}}(a,b,q)+ aq^{2k} G_{2k-2_{a}}(a,b,q)+ abq^{4k-2}G_{2k-3_{a}}(a,b,q).
\end{equation}
Adding~\eqref{eq1*} and~\eqref{eq2*} and replacing $a$ by $b$ and $b$ by $aq$, we get
\begin{equation*}
\begin{aligned}
G_{2k_a}(b,aq,q)&=G_{2k-1_{b}}(b,aq,q)+ \left(bq^{2k} +abq^{2k+1}\right) G_{2k-2_{a}}(b,aq,q)
\\&+\left(abq^{4k-1} + a^2bq^{4k}\right)G_{2k-3_{a}}(b,aq,q).
\end{aligned}
\end{equation*}
Thus by~\eqref{ploc},we deduce that
\begin{equation*}
G_{2k+1_{ab}}(a,b,q)=(1+aq)G_{2k_a}(b,aq,q).
\end{equation*}

It remains now to prove that Equations~\eqref{key2}, \eqref{key3} and~\eqref{key4} are also satisfied.

\subsection{Equation~\eqref{key2}}
We now treat the second equation of Proposition~\ref{keyprop} and prove that
$$G_{2k+1_{b^2}}(a,b,q) = (1+aq) G_{2k_{b}}(b,aq,q).$$

By adding~\eqref{eq2} and~\eqref{eq3} together, we obtain
\begin{equation}
\label{cas2eq1}
G_{2k+1_{b^2}}(a,b,q)=G_{2k+1_{ab}}(a,b,q)+aq^{2k+1}G_{2k_{ab}}(a,b,q) +b^2q^{2k+1}G_{{2k-1}_{a}}(a,b,q).
\end{equation}
Replacing $k$ by $k-1$ in~\eqref{eq2} and substituting into~\eqref{cas2eq1}, we get
\begin{equation}
\label{cas2eq2}
\begin{aligned}
G_{2k+1_{b^2}}(a,b,q)=&\ G_{2k+1_{ab}}(a,b,q)+aq^{2k+1}G_{2k_{ab}}(a,b,q) \\
&+b^2q^{2k+1}G_{2k-1_{ab}}(a,b,q) +ab^2q^{4k} G_{2k-2_{ab}}(a,b,q).
\end{aligned}
\end{equation}
By the induction hypothesis,
\begin{equation}
\label{cas2eq3}
\begin{aligned}
G_{2k+1_{b^2}}(a,b,q)=&\ (1+aq) \Big[ G_{2k_{a}}(b,aq,q)+aq^{2k+1}G_{2k-1_{a}}(b,aq,q) \\
&+b^2q^{2k+1}G_{2k-2_{a}}(b,aq,q) +ab^2q^{4k} G_{2k-3_{a}}(b,aq,q) \Big].
\end{aligned}
\end{equation}
So we only need to prove that the expression between square brackets equals $G_{2k_b}(b,aq,q)$.

Adding~\eqref{eq7} and~\eqref{eq8} and replacing $k$ by $k-1$ gives
\begin{align*}
G_{2k_{b}}(a,b,q)=& \ G_{2k_{a}}(a,b,q)+bq^{2k}G_{2k-1_{a}}(a,b,q)
\\&+a^2q^{2k+1}G_{2k-2_{a}}(a,b,q)+a^2bq^{4k-1}G_{2k-3_{a}}(a,b,q).
\end{align*}
Replacing $a$ by $b$ and $b$ by $aq$ leads to
\begin{align*}
G_{2k_{b}}(b,aq,q)=& \ G_{2k_{a}}(b,aq,q)+aq^{2k+1}G_{2k-1_{a}}(b,aq,q)
\\&+b^2q^{2k+1}G_{2k-2_{a}}(b,aq,q)+b^2aq^{4k}G_{2k-3_{a}}(b,aq,q).
\end{align*}
Thus by~\eqref{cas2eq3}, $G_{2k+1_{b^2}}(a,b,q) = (1+aq) G_{2k_{b}}(b,aq,q)$, and~\eqref{key2} is proved.

\subsection{Equation~\eqref{key3}}
Let us now turn to Equation~\eqref{key3} and prove that
$$G_{2k+2_{ab}}(a,b,q) = (1+aq) G_{2k+1_{a}}(b,aq,q).$$

Substituting~\eqref{eq1} into~\eqref{eq2}, we have
\begin{equation}
\label{plouf}
G_{2k+1_a}(a,b,q)=G_{2k_{b}}(a,b,q)+aq^{2k+1} G_{2k_{ab}}(a,b,q)+abq^{2k+1} G_{2k-1_{a}}(a,b,q).
\end{equation}
Replacing $k$ by $k-1$ in~\eqref{eq5} and substituting in~\eqref{plouf}, we obtain
\begin{equation}
\label{plouf3}
\begin{aligned}
G_{2k+1_a}(a,b,q)=&\ G_{2k_{b}}(a,b,q)+aq^{2k+1} G_{2k-1_{b}}(a,b,q) + a^2bq^{4k+1} G_{2k-2_{a}}(a,b,q)
\\&+a^2b^2q^{6k-1} G_{2k-3_{a}}(a,b,q)+abq^{2k+1} G_{2k-1_{a}}(a,b,q).
\end{aligned}
\end{equation}
Then replacing $a$ by $b$ and $b$ by $aq$ in~\eqref{plouf3}, we obtain the following equation:
\begin{equation}
\label{etoile}
\begin{aligned}
G_{2k+1_a}(b,aq,q)=& \ G_{2k_{b}}(b,aq,q)+bq^{2k+1} G_{2k-1_{b}}(b,aq,q) \\
&+ ab^2q^{4k+2} G_{2k-2_{a}}(b,aq,q)+a^2b^2q^{6k+1} G_{2k-3_{a}}(b,aq,q)\\
&+abq^{2k+2} G_{2k-1_{a}}(b,ab,q).
\end{aligned}
\end{equation}
Thus we want to prove that
\begin{equation}
\label{paf}
\begin{aligned}
G_{2k+2_{ab}}(a,b,q)=&\ G_{2k+1_{b^2}}(a,b,q)+bq^{2k+1} G_{2k+1_{a^2}}(a,b,q)\\
& + ab^2q^{4k+2} G_{2k-1_{ab}}(a,b,q)+a^2b^2q^{6k+1} G_{2k-2_{ab}}(a,b,q)\\
&+abq^{2k+2} G_{2k_{ab}}(a,b,q).
\end{aligned}
\end{equation}

We add Equations~\eqref{eq4} and~\eqref{eq5} together and get
\begin{equation}
\label{star}
\begin{aligned}
G_{2k+2_{ab}}(a,b,q)=&\ G_{2k+1_{b^2}}(a,b,q)+bq^{2k+1} G_{2k_{a}}(a,b,q) + ab^2q^{4k+2} G_{2k-1_{a}}(a,b,q)
\\&+abq^{2k+2} G_{2k_{a}}(a,b,q).
\end{aligned}
\end{equation}
Replacing $k$ by $k-1$ in~\eqref{eq2},~\eqref{eq6} and~\eqref{eq7} and substituting into~\eqref{star} yields
\begin{align*}
G&_{2k+2_{ab}}(a,b,q)=\ G_{2k+1_{b^2}}(a,b,q)\\
&+bq^{2k+1} \left(G_{2k+1_{a^2}}(a,b,q)- a^2q^{2k+1} G_{2k-2_a}(a,b,q) -a^2bq^{4k-1} G_{2k-3_a}(a,b,q)\right) \\
&+ ab^2q^{4k+2} \left(G_{2k-1_{ab}}(a,b,q) +aq^{2k-1} G_{2k-2_{ab}}(a,b,q) \right)\\
&+abq^{2k+2} \left(G_{2k_{ab}}(a,b,q) + aq^{2k}G_{2k-2_a}(a,b,q) +abq^{4k-2}G_{2k-3_a}(a,b,q) \right).
\end{align*}
The two terms with a minus simplify with the last two terms and we obtain~\eqref{paf}.
Thus $G_{2k+2_{ab}}(a,b,q) = (1+aq) G_{2k+1_{a}}(b,aq,q)$ and~\eqref{key3} is proved. Let us now turn to the last and most difficult equation of Proposition~\ref{keyprop}.

\subsection{Equation~\eqref{key4}}

Finally, it remains to prove that
$$G_{2k+1_{a^2}}(a,b,q) = (1+aq) G_{2k-1_{b}}(b,aq,q).$$

Adding~\eqref{eq3} and~\eqref{eq4} together and replacing $a$ by $b$ and $b$ by $aq$ leads to

\begin{equation}
\label{eq34}
G_{2k+1_b}(b,aq,q)=G_{2k+1_a}(b,aq,q)+aq^{2k+2}G_{2k_a}(b,aq,q)+a^{2}q^{2k+3}G_{2k-1_a}(b,aq,q).
\end{equation}
We now want to show that
\begin{equation}
\label{goal}
G_{2k+3_{a^2}}(a,b,q)=G_{2k+2_{ab}}(a,b,q)+aq^{2k+2}G_{2k+1_{ab}}(a,b,q)+a^{2}q^{2k+3}G_{2k_{ab}}(a,b,q).
\end{equation}
To do so, we introduce some new recurrences. Recurrences~\eqref{eqd1}--\eqref{eqd8} would theoretically have been sufficient, as they completely characterise the partitions we consider, but the proof would imply too many substitutions and be very long. By definition we have 
\begin{equation}
\label{pif3}
d_{2k+3_{a^2}}(u,v,n)=d_{2k+2_{ab}}(u,v,n)+e_{2k+2_a}(u,v,n)+e_{2k+3_{a^2}}(u,v,n).
\end{equation}
In a similar manner as above, by the difference conditions of the matrix $A$, and removing the largest part, we show that 
\begin{equation}
\label{pif1}
\begin{aligned}
e_{2k+2_a}(u,v,n)&=d_{2k+1_{ab}}(u-1,v,n-(2k+2))
\\&-e_{2k+1_{a^2}}(u-1,v,n-(2k+2))-e_{2k+1_{ab}}(u-1,v,n-(2k+2)),
\end{aligned}
\end{equation}
and 
\begin{equation}
\label{pif2}
\begin{aligned}
e_{2k+3_{a^2}}(u,v,n)&=d_{2k_{ab}}(u-2,v,n-(2k+3))
\\&+e_{2k_{a}}(u-2,v,n-(2k+3))+e_{2k_{b}}(u-2,v,n-(2k+3)),
\end{aligned}
\end{equation}
By removing the largest part again, we show that
\begin{equation*}
e_{2k+1_{ab}}(u-1,v,n-(2k+2))=d_{2k-1_a}(u-2,v-1,n-(4k+3)),
\end{equation*}
and
\begin{equation*}
e_{2k_{b}}(u-2,v,n-(2k+3))=d_{2k-1_a}(u-2,v-1,n-(4k+3)).
\end{equation*}
Therefore
\begin{equation*}
e_{2k+1_{ab}}(u-1,v,n-(2k+2))=e_{2k_{b}}(u-2,v,n-(2k+3)).
\end{equation*}
And in the same way
\begin{align*}
e_{2k+1_{a^2}}(u-1,v,n-(2k+2))=&\ e_{2k-2_{b}}(u-3,v,n-(4k+3))\\
&+d_{2k-2_{a}}(u-3,v,n-(4k+3)),
\end{align*}
and
\begin{equation*}
e_{2k_{a}}(u-2,v,n-(2k+2))=e_{2k-2_{b}}(u-3,v,n-(4k+3))+d_{2k-2_{a}}(u-3,v,n-(4k+3)).
\end{equation*}
Therefore
\begin{equation*}
e_{2k+1_{a^2}}(u-1,v,n-(2k+2))=e_{2k_{a}}(u-2,v,n-(2k+2)).
\end{equation*}
So by plugging~\eqref{pif1} and~\eqref{pif2} into~\eqref{pif3}, we get
\begin{align*}
d_{2k+3_{a^2}}(u,v,n)=&\ d_{2k+2_{ab}}(u,v,n)
\\&+d_{2k+1_{ab}}(u-1,v,n-(2k+2))+d_{2k_{ab}}(u-2,v,n-(2k+3)),
\end{align*}
which gives in terms of generating functions
\begin{equation*}
G_{2k+3_{a^2}}(a,b,q)=G_{2k+2_{ab}}(a,b,q)+aq^{2k+2}G_{2k+1_{ab}}(a,b,q)+a^2q^{2k+3}G_{2k_{ab}}(a,b,q).
\end{equation*}
This is exactly~\eqref{goal}, so by the induction hypothesis and the results from the last two subsections,
$$G_{2k+1_{a^2}}(a,b,q) = (1+aq) G_{2k-1_{b}}(b,aq,q).$$
This concludes the proof of Proposition~\ref{keyprop}.

\section{Conclusion}
The proof of this paper shows that the method of weighted words can be combined with $q$-difference equations to prove theorems with intricate difference conditions, where it is hard to compute the minimal partition as in the classical method of weighted words. It also gives a first example of the method of weighted words where some squared colours appear. It would be interesting to see if this method can be applied to other identities, such as Andrews's theorems~\cite{Generalisation2,Generalisation1} or its generalisations to overpartitions~\cite{Doussegene2,Doussegene}, or other identities arising from the theory of vertex operators or Lie algebras like those of Primc~\cite{Primc} and Meurman-Primc~\cite{Meurman} for example. Introducing squared colours in the method of weighted words for G\"ollinitz' theorem~\cite{AllAndGor2} might also lead to some interesting new identities.
Finally, as this new refinement gives more combinatorial insight on Siladi\'c's identity, it would be interesting to know if a bijective proof can be found.

\section*{Acknowledgements}
The author thanks Jeremy Lovejoy for very helpful discussions.

\bibliographystyle{siam}
\bibliography{biblio}

\end{document}